\documentclass[12pt,reqno]{amsart}

\usepackage[utf8]{inputenc}
\usepackage{amsmath, amsfonts, amsthm, amssymb}
\usepackage[margin=1in]{geometry}

\usepackage{bxeepic}
\usepackage{pstricks,pst-all,pst-pdf}
\usepackage{pifont}
\usepackage{hhline}
\usepackage{eepic}

\usepackage{mathtools}
\usepackage{float}
\usepackage{hyperref}
\usepackage{wrapfig}
\usepackage{array}

\usepackage{perpage}
\usepackage{tcolorbox}
\usepackage{fdsymbol}

\newcommand{\tsc}{\fontfamily{cmr}\small\selectfont\scshape}
\newcommand{\liber}{\fontfamily{LinuxLibertineT-TLF}\selectfont}

\newcommand\settxtfont{\liber}

\newcommand\setcapfont{\tsc}

\allowdisplaybreaks[4]

\newlength\gvhs
\newlength\gvls
\newlength\gvps
\newlength\gvms
\newlength\gvss
\newlength\gvts
\newlength\gvxs
\newcommand{\vlskip}{\vspace{\gvls}}

\newcommand{\vmskip}{\vspace{\gvms}}
\newcommand{\vsskip}{\vspace{\gvss}}
\newcommand{\vtskip}{\vspace{\gvts}}

\MakePerPage{footnote}
\newcommand\ftnnote[1]{\footnote{\ #1}}

\newcommand\boxftnnote[1]{\,*\footnotetext{* #1}}

\newcommand\hlabel{\phantomsection\label}

\newenvironment{mcenter}%
{\parskip=0pt\par\nopagebreak\centering}%
{\par\noindent\ignorespacesafterend}%

\newcommand\ifempty[1]{\def\temp{#1}\ifx\temp\empty}

\newcommand\BeginPic{\begingroup}
\newcommand\EndPic[1][0.1pt]{\vspace{#1}\endgroup\WFclear}

\makeatletter

\newcounter{wt@lines}

\define@key{wtkeys}{width}{\def\wt@width{#1}\relax}
\define@key{wtkeys}{w}{\def\wt@width{#1}\relax}
\define@key{wtkeys}{scale}{\def\wt@scale{#1}\relax}
\define@key{wtkeys}{s}{\def\wt@scale{#1}\relax}
\define@key{wtkeys}{lines}{\setcounter{wt@lines}{#1}\relax}
\define@key{wtkeys}{l}{\setcounter{wt@lines}{#1}\relax}
\define@key{wtkeys}{vskip}{\def\wt@vskip{#1}\relax}
\define@key{wtkeys}{v}{\def\wt@vskip{#1}\relax}
\define@key{wtkeys}{pos}{\def\wt@side{#1}\relax}
\define@key{wtkeys}{p}{\def\wt@side{#1}\relax}
\define@key{wtkeys}{caption}{\def\wt@caption{#1}\relax}
\define@key{wtkeys}{c}{\def\wt@caption{#1}\relax}

\newcommand{\wtext}[2]%
{%
  \def\wt@width{1.0}%
  \def\wt@scale{1.0}%
  \setcounter{wt@lines}{-1}%
  \def\wt@vskip{-2ex}%
  \def\wt@side{r}%
  \let\wt@caption\empty%
  \setkeys{wtkeys}{#2}%
  \ifx\empty\wt@caption\relax\def\cptext{\empty}\else
                             \def\cptext{\par{\setcapfont \wt@caption}\vspace{-5pt}}\fi
  \ifnum\thewt@lines<0\relax
      \begin{wrapfigure}{\wt@side}{\wt@width\textwidth}%
      {\vspace{\wt@vskip}{#1}}%
      \cptext
      \end{wrapfigure}%
  \else
      \begin{wrapfigure}[\thewt@lines]{\wt@side}{\wt@width\textwidth}%
      {\vspace{\wt@vskip}{#1}}%
      \cptext
      \end{wrapfigure}%
  \fi
}

\makeatother

\newcommand\fxbox[2][3pt]{\setlength{\fboxrule}{#1}\framebox[1.1\width]{\hspace{2pt}#2\hspace{2pt}}}

\newcommand{\gte}{\geqslant}
\newcommand{\lte}{\leqslant}

\newtheorem{theorem}{Theorem}[section]
\newtheorem{lemma}[theorem]{Lemma}
\newtheorem{corollary}[theorem]{Corollary}

\newtheorem{question}[theorem]{Question}

\newcommand\note{\vsskip\noindent\textbf{Remark}}

\newenvironment{mproof}{\vtskip\noindent\textbf{\underline{Proof}}. \ignorespaces}{\ignorespaces\kern0.1em~\ensuremath{\square}\smallbreak}
\newenvironment{mproof*}{\vtskip\textbf{\underline{Proof}}. \ignorespaces}{\smallbreak}

\newcommand{\mmod}[1]{\ (\mathrm{mod}\ #1)}

\newcommand\isdiv{\mathrel{\smash{\lower.1ex\hbox{$\vdots$}}}}
\newcommand\notdiv{\kern-.03ex\mathrel{\hbox{$\not{\kern.03ex\isdiv}$}}}

\newcommand\pp{\,.}
\newcommand\pc{\,,}

\newcommand\veps{\varepsilon}

\newcommand\Z{\mathbb Z}

\newcommand{\mcb}{\mathcal{B}}
\newcommand{\mcc}{\mathcal{C}}
\newcommand{\mcl}{\mathcal{L}}
\newcommand{\mcz}{\mathcal{Z}}

\newcommand{\mdph}[4]{$\mathcal{M\kern0.15ex}^{#4}_{#1,#2,#3}$}
\newcommand{\ndph}[3]{$\mathcal{N\kern0.25ex}^{#3}_{#1,#2}$}
\newcommand{\pdph}[3]{$\mathcal{P\kern0.25ex}^{#3}_{#1,#2}$}
\newcommand{\wstar}{\largewhitestar}

\newtagform{brackets}[\textbf]{$\langle$}{$\rangle$}
\usetagform{brackets}

\newcommand\eqtlabel[1]{$\langle #1\rangle$}

\newcommand{\xxx}{\medskip\begin{center}*\quad *\quad *\end{center}\medskip}

\tcbuselibrary{skins}

\newtcolorbox{cmbox}{colback=blue!20!white, width=\linewidth-12mm,
before={\vsskip\begin{mcenter}}, after={\end{mcenter}}, halign=left, before upper={\parindent3ex}, arc=3mm, boxsep=2mm, left=1mm, right=1mm}

\newtcolorbox{thbox}{colback=black!10!white, width=\linewidth-30mm, before={\vsskip\begin{mcenter}}, after={\end{mcenter}}, halign=left, before upper={\noindent\kern0.01pt\parindent=3ex},, arc=3mm, left=5pt, right=5pt}

\newcounter{qnindex}
\newtcolorbox{qnxbox}[1]{center, title={\refstepcounter{qnindex}$\mathbf{\thesection.\theqnindex.}$\quad#1}, colback=red!10!white, coltitle = black, colbacktitle=black!20!white, width=\linewidth, halign=left, before upper={\parindent3ex}, before={\vsskip\begin{mcenter}}, after={\end{mcenter}}, arc=1mm, boxsep=2mm, left=1mm, right=1mm}

\begin{document}

\sloppy
\allowdisplaybreaks
\interfootnotelinepenalty=10000
\hbadness = 10000
\vbadness = 10000


\settxtfont


\newcommand\nck[2]{\ensuremath{\langle #1 \,\text{$\times$\,\llap{$\times$}}\, #2\rangle}}
\newcommand\kcb[1]{\ensuremath{\text{K}_{#1}}}
\newcommand\fxkcb[1]{\noindent\fxbox[2pt]{\,\kcb{#1}\,}}


\title{Closed polylines with fixed self-intersection index}
\author{Dmitri V.~Fomin}
\address{Boston, USA}
\email{fomin@hotmail.com}
\date{\today}
\keywords{closed polylines, self-crossing polygons, planar combinatorial geometry}
\subjclass[2010]{Primary: 52C30; Secondary: 52C45, 52A37}
\begin{abstract}
We investigate the existence of closed polylines (also known as closed polygonal chains or self-crossing polygons) that intersect each of their edges the same number of times. The most general question in this corner of combinatorial geometry asks for all pairs $(n, k)$ such that there exists a closed polyline with $n$ edges, each intersecting the same polyline exactly $k$ times. For $k = 1$ and $k = 2$, this is a very simple question answered several decades ago. In this article, we present a complete solution for $k = 3, 4, 6$ and $n=42$, as well as the proof of some non-existence theorems. In conclusion, we show that, for an arbitrary positive integer $k$, a polyline of the required type exists for any sufficiently large integer $n$ such that $nk$ is even.
\end{abstract}

\maketitle


\vsskip

\centerline{%
\setlength{\unitlength}{9mm}
\begin{picture}(9,8)(0,-1)
  \color{red}
  \linethickness{0.33mm}
  \path(0.00, 1.00)(4.00, 4.00)
  \path(4.00, 4.00)(5.00, 0.00)
  \path(5.00, 0.00)(5.00, 5.00)
  \path(5.00, 5.00)(4.00, 0.00)
  \path(4.00, 0.00)(7.00, 5.00)
  \path(7.00, 5.00)(2.00, 7.00)
  \path(2.00, 7.00)(4.00, 1.00)
  \path(4.00, 1.00)(8.00, 6.00)
  \path(8.00, 6.00)(4.00, 3.00)
  \path(4.00, 3.00)(3.00, 7.00)
  \path(3.00, 7.00)(3.00, 2.00)
  \path(3.00, 2.00)(4.00, 7.00)
  \path(4.00, 7.00)(1.00, 2.00)
  \path(1.00, 2.00)(6.00, 0.00)
  \path(6.00, 0.00)(4.00, 6.00)
  \path(4.00, 6.00)(0.00, 1.00)
  \color{black}
  \put(0.00, 1.00){\circle*{0.17}}
  \put(4.00, 4.00){\circle*{0.17}}
  \put(5.00, 0.00){\circle*{0.17}}
  \put(5.00, 5.00){\circle*{0.17}}
  \put(4.00, 0.00){\circle*{0.17}}
  \put(7.00, 5.00){\circle*{0.17}}
  \put(2.00, 7.00){\circle*{0.17}}
  \put(4.00, 1.00){\circle*{0.17}}
  \put(8.00, 6.00){\circle*{0.17}}
  \put(4.00, 3.00){\circle*{0.17}}
  \put(3.00, 7.00){\circle*{0.17}}
  \put(3.00, 2.00){\circle*{0.17}}
  \put(4.00, 7.00){\circle*{0.17}}
  \put(1.00, 2.00){\circle*{0.17}}
  \put(6.00, 0.00){\circle*{0.17}}
  \put(4.00, 6.00){\circle*{0.17}}
  \put(2,-1){$n = 16 \pc\ k = 5$}
\end{picture}
\kern6ex
\begin{picture}(7,8)(0,-1)
	\linethickness{0.33mm}
	\color{red}
	\path(1.50,0.00)(5.50,3.25)(4.50,7.00)(3.50,3.00)(0.00,4.66)
	\path(0.00,4.66)(3.50,2.00)(5.50,7.00)(1.50,3.75)(2.50,0.00)(3.50,4.00)(7.00,2.33)
	\path(7.00,2.33)(3.50,5.00)(1.50,0.00)
	\color{black}
	\put(1.50,0.00){\circle*{0.17}}
	\put(5.50,3.25){\circle*{0.17}}
	\put(4.50,7.00){\circle*{0.17}}
	\put(3.50,3.00){\circle*{0.17}}
	\put(0.00,4.66){\circle*{0.17}}
	\put(3.50,2.00){\circle*{0.17}}
	\put(5.50,7.00){\circle*{0.17}}
	\put(1.50,3.75){\circle*{0.17}}
	\put(2.50,0.00){\circle*{0.17}}
	\put(3.50,4.00){\circle*{0.17}}
	\put(7.00,2.33){\circle*{0.17}}
	\put(3.50,5.00){\circle*{0.17}}
	\put(2,-1){$n=12\pc\ k=3$}
\end{picture}%
}

\vmskip

\section{Introduction}

Let us say that a flat closed polyline has type \nck{n}{k} (or satisfies condition \nck{n}{k}) if the polyline consists of $n$ edges, and each edge is intersected exactly $k$ times by the rest of the polyline. The same definition can be extended to any collection of segments on the plane.

It is assumed that all these intersections are transversal (i.e., no two intersecting edges are parallel) and occur only in the interior of the edges; a vertex of the polyline is not considered a point of self-intersection. Since we are only interested in the number of self-intersections, we can always perform a tiny ``perturbation'' of the polyline's vertices to ensure that all the self-intersection points are pairwise distinct. The figure above shows two polylines of types \nck{16}{5} and~\nck{12}{3}, respectively---for instance, the left one is a closed polygonal chain with sixteen edges such that each of them intersects exactly five other edges of this chain.

In this case we will call that number $k$ the \textit{self-intersection index} of the given polyline. We will also say that if a \nck{n}{k} polyline exists, then type \nck{n}{k} is \textit{feasible}.

Quite naturally, the following general question arises:

\vsskip

\begin{cmbox}
For which values of $n$ and $k$ does polyline of type \nck nk exist?\boxftnnote{As a particular case, consider this question: for which natural $k$ does there exist a type~\nck{42}{k} polyline? This exploratory problem was offered more than $50$ years ago at the district round of the Leningrad City Mathematical Olympiad. In section \ref{sec:b42} we provide a more or less complete solution to that question.}
\end{cmbox}

\vsskip

In this article we will investigate numerous cases of this general problem, providing complete answers for cases $k=1$, $2$, $3$, $4$ and $6$, as well as for~$n=42$. For $k=5$, only type \nck{12}{5} remains a mystery. We will also prove the theorem that shows that for any fixed self-intersection index $k$ a polyline of type \nck nk exists for all sufficiently large values of $n$ provided that $nk$ is even. Namely, type \nck nk is feasible if either $k$ is even and $n \gte 2k+3$, or if $k$ is odd and $n$ is an even number such that~$n\gte 8k+6$.

In the end we are posing several interesting unsolved problems related to this topic.

\section{Non-existence theorems}
\hlabel{sec:nonexist}

Let us start our investigation by introducing some useful notation. Namely, $\mcb_n$ will denote the set of all natural numbers $k$ for which a polyline of type \nck{n}{k} exists. Similarly, $\mcc_k$ denotes the set of all natural numbers $n$ for which it is possible to construct a polyline of type~\nck{n}{k}.

\vsskip

It is easy to see that the parity of numbers $n$ and $k$ is very important.

\begin{lemma}
\hlabel{thm:nk_must_be_even}
If both numbers $n$ and $k$ are odd, there is no polyline of type~\nck{n}{k}.
\end{lemma}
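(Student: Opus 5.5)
The argument is a double-counting (handshake) argument on intersection pairs. Consider the set of unordered pairs $\{e,f\}$ of distinct edges of the polyline that intersect. By the conventions of the paper, every intersection is transversal and lies in the interiors of both edges. In particular, two edges either cross or do not; collinear overlaps are excluded because intersecting edges are never parallel. Two distinct segments can cross in at most one interior point, so each edge meets exactly $k$ other edges, once each. Counting incidences (edge, crossing edge) therefore gives $nk$, and each intersecting pair is counted exactly twice. Hence the number of intersecting pairs equals $nk/2$, which is impossible when both $n$ and $k$ are odd.

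The key steps, in order, are these:
\begin{enumerate}
\item Justify that ``intersected exactly $k$ times'' means exactly $k$ other edges cross the given edge. This uses the fact that two non-parallel segments meet in at most one point, and that adjacent edges meet only at their common vertex, which is not counted.
\item Form the intersection graph: its vertices are the $n$ edges, and two are joined when the edges cross. This graph is $k$-regular.
\item Apply the handshake lemma: $\sum \deg = nk = 2|E|$.
\end{enumerate}

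The only point needing care is step 1. One must make sure that no pair of edges contributes two or more crossings, and that the symmetry ``$e$ crosses $f$ iff $f$ crosses $e$'' holds. The transversality assumption stated in the introduction handles both. Note that the closedness of the polyline plays no role: the statement holds for any collection of segments of type \nck{n}{k}.
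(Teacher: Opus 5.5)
Your proof is correct and is the paper's argument: the paper builds the same intersection graph on the $n$ edges and notes that a graph cannot have an odd number of odd-degree vertices, which is the handshake lemma applied to your $k$-regular graph. Your step 1 (each crossing edge contributes exactly one intersection point, and crossing is symmetric) just spells out what the paper leaves implicit.
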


\begin{mproof}
Assuming the opposite, consider graph $G$ whose vertices are $n$ segments of the given polyline $L$, while the edges of the graph connect vertices (segments) which intersect each other. Since a graph cannot have an odd number of odd vertices, the obtained contradiction proves the lemma.
\end{mproof}

It is also obvious that polylines of type \nck{n}{k} do not exist if $k\gte n-2$. Indeed, an edge of a polyline cannot intersect itself as well as the two adjacent edges. In other words,
$$
nk \notdiv 2 \,\lor\, n \lte k+2 \implies k \notin \mcb_n \pc\ n \notin \mcc_k \pp
$$

\vtskip

Here is another, relatively simple, but not so well known fact.

\begin{lemma}
\hlabel{thm:nnminus3}
A polyline of type \nck{n}{n-3} exists if and only if $n \gte 5$ and $n$ is odd.
\end{lemma}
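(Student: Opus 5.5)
Two things need proof: that an \nck{n}{n-3} polyline forces $n$ to be odd and at least $5$, and that such polylines exist for every odd $n \gte 5$.

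\emph{Necessity.} If $n$ is even, then $n-3$ is odd, and $n(n-3)$ is even, so Lemma \ref{thm:nk_must_be_even} does not apply. A geometric argument is needed instead. In an \nck{n}{n-3} polyline every edge meets every edge except itself and its two neighbours. Label the vertices $A_1,\dots,A_n$ cyclically and fix the edge $A_1A_2$.

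Every edge $A_iA_{i+1}$ with $3 \lte i \lte n-1$ crosses $A_1A_2$. Consider the line $\ell$ through $A_1A_2$. Since each such edge crosses the segment, its endpoints $A_i$ and $A_{i+1}$ lie on opposite sides of $\ell$. Following the chain $A_3,A_4,\dots,A_n$, the side of $\ell$ therefore alternates at every step. The path $A_3A_4\dots A_n$ has $n-3$ edges, so $A_3$ and $A_n$ lie on opposite sides of $\ell$ exactly when $n-3$ is odd, i.e.\ when $n$ is even.

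Now use the edges $A_nA_1$ and $A_2A_3$, which are adjacent to $A_1A_2$ and hence do not cross it. They must still cross each other (as long as $n \gte 5$, they are non-adjacent). Assume first that $A_n$ and $A_3$ lie on opposite sides of $\ell$. The segment $A_2A_3$ lies in the closed half-plane of $A_3$, touching $\ell$ only at $A_2$. Likewise $A_nA_1$ lies in the other half-plane, touching $\ell$ only at $A_1$. These two segments could meet only on $\ell$, at a point common to both, which would require $A_1=A_2$. So they are disjoint, a contradiction.

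Hence $A_3$ and $A_n$ are on the same side of $\ell$, which forces $n-3$ to be even, i.e.\ $n$ odd. The case $n=3$ is excluded because $k=0$ is not a natural number and a triangle has no crossings. This gives $n \gte 5$ and $n$ odd.

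\emph{Sufficiency.} For odd $n=2m+1 \gte 5$, take the regular star polygon $\{n/m\}$: the vertices of a regular $n$-gon on a circle, with each vertex joined to the $m$-th next one, i.e.\ the ``most pointed'' star. Since $\gcd(n,m)=1$, this is a single closed polyline with $n$ edges. Each edge is a chord that cuts off $m-1$ vertices on one side and $m$ on the other.

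Two chords with disjoint endpoint sets cross exactly when their endpoints interleave around the circle. Consider another edge $P_jP_{j+m}$ (indices taken mod $n$) that shares no endpoint with the edge $P_0P_m$. Counting the indices $j$ for which exactly one endpoint falls strictly inside the arc of $m-1$ points between $P_0$ and $P_m$, one finds that every edge crosses $P_0P_m$ except the two that share an endpoint with it. This gives $n-3$ crossings per edge.

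The check is routine: $P_j$ inside the short arc means $1\lte j\lte m-1$, in which case $j+m$ lies in $[m+1,2m-1]$, which is outside the arc; the symmetric case is handled the same way. This yields $2(m-1)=n-3$ crossings. All crossings are transversal and occur at interior points of the edges, but several may share a point (at the centre, for instance). A tiny perturbation of the vertices makes them pairwise distinct, as allowed in the introduction.

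The main obstacle is the necessity direction: the key idea is to look at the side of $\ell$ on which $A_3$ and $A_n$ lie. One detail to watch is that both endpoints of $A_iA_{i+1}$ really are off the line $\ell$; this is guaranteed by general position, as crossings are interior and transversal.
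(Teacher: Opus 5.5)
Your proof is correct and follows essentially the same argument as the paper. Existence comes from the star of main diagonals of a regular odd $n$-gon. Non-existence for even $n$ comes from the fact that the two neighbours of an edge $A_1A_2$ must cross each other, which puts $A_3$ and $A_n$ on the same side of the line $A_1A_2$, while the chain $A_3A_4\dots A_n$ crosses that line an odd number $n-3$ of times.

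There are two harmless slips. First, the neighbours of an edge are already non-adjacent when $n=4$, so your argument covers that even case too, not only $n\geq 5$. Second, for odd $n$ no main diagonal passes through the centre, so your example of coinciding crossing points is inaccurate; the perturbation remark makes this irrelevant.
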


\begin{mproof}
For any odd $n \gte 5$ an obvious example of such a polyline is the one formed by all main diagonals of a regular polygon with $n$ sides.

Let us assume that such polyline $L$ exists for some even~$n$. Clearly, that is possible only if each of its edges intersects all other edges except for itself and the two adjacent edges. Consider an arbitrary edge $AB$ and two adjacent edges. These edges must intersect and, therefore, their other ends (different from points $A$ and $B$) lie to the same side of line~$AB$. Hence, if we remove the three above mentioned edges (including $AB$), then each of the remaining $n-3$ must intersect line~$AB$. However, the polyline formed by these edges connects two points that lie to the same side of $AB$ which is impossible since $n-3$ is odd.
\end{mproof}

\vtskip

Here is another non-existence theorem for one very concrete case~\nck{6}{2}.

\begin{lemma}
\hlabel{thm:nk62}
Polylines of type \nck{6}{2} do not exist.
\end{lemma}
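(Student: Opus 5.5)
The plan is to first reduce the problem to a few combinatorial patterns of crossings, and then rule out each pattern geometrically. Number the edges $e_1,\dots,e_6$ cyclically. Edge $e_i$ cannot meet itself or $e_{i\pm1}$, so it can cross only $e_{i+2}$, $e_{i+3}$ and $e_{i-2}$. The \emph{admissible} crossing graph is therefore the complement of the hexagon $C_6$. This is the triangular prism: two triangles $\{1,3,5\}$ and $\{2,4,6\}$, joined by the matching $\{1,4\},\{2,5\},\{3,6\}$. For a polyline of type \nck{6}{2}, the actual crossing graph $G$ (as in the proof of Lemma~\ref{thm:nk_must_be_even}) is a $2$-regular spanning subgraph of this prism.

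Such a subgraph is either the two disjoint triangles, or a Hamiltonian $6$-cycle of the prism. A Hamiltonian cycle must use exactly two matching edges. Up to rotations and reflections of the indexing, it is $1\!-\!3\!-\!5\!-\!2\!-\!6\!-\!4\!-\!1$. In that case the forbidden (non-crossing) admissible pairs are $\{1,5\}$, $\{2,4\}$ and $\{3,6\}$. So there are exactly two cases to exclude: (A) $G$ is the two triangles, and (B) $G$ is this one $6$-cycle.

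For the geometric part I would reuse the tool from Lemma~\ref{thm:nnminus3}. Fix an edge $AB=e_i$, and let $C$ and $D$ be the far endpoints of $e_{i-1}$ and $e_{i+1}$. Whether $C$ and $D$ lie on the same side of line $AB$ is determined by whether $e_{i-1}$ and $e_{i+1}$ cross. The remaining three edges form a path from $D$ back to $C$. By parity, this path meets line $AB$ an even number of times if $C,D$ are on the same side, and an odd number of times otherwise. The crossings with the \emph{segment} $AB$ are prescribed by $G$, which here requires exactly two of them. A segment that crosses line $AB$ but not segment $AB$ must cross the line outside $AB$, beyond $A$ or beyond $B$.

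In case (A), $e_{i-1}$ and $e_{i+1}$ cross for every $i$. Also $e_{i+2}$ and $e_{i-2}$ cross $e_i$, while $e_{i+3}$ does not. So the path $e_{i+2}e_{i+3}e_{i+4}$ crosses line $AB$ an even number of times, two of them on the segment. Hence $e_{i+3}$ misses the line entirely or meets it outside the segment. I would combine this over all $i$ with the fact that $e_1,e_3,e_5$ are three pairwise crossing segments, and likewise $e_2,e_4,e_6$. Such a triple has a forced ``triangle'' structure: its three crossing points bound a small triangle. Together with the parity constraints, this should give a contradiction with orientation, for example by tracking the turning direction at each vertex. Case (B) would be handled the same way. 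There the parity at each edge is even or odd according to whether the two neighbours cross, which fails exactly at the edges adjacent to the missing pairs. I would pick an edge, say $e_1$, at which the odd parity forces a crossing of line $AB$ outside $AB$ by a specific edge, and then push that edge's position around the cycle until it clashes with a forbidden or required crossing.

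I expect the main obstacle to be turning these local side-of-line constraints into a global contradiction, particularly in case (A). The purely combinatorial data there is quite symmetric, and plain parity might not suffice. If it does not, the fallback is to classify the order type of the six vertices, which is either in convex position or has hull size $3,4,5$. For each type I would count how many crossings a closed hexagon can have, or which edges can cross at all, and show that each edge meeting exactly two others is impossible.
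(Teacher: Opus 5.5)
Your combinatorial reduction is correct and matches the paper's. The non-crossing admissible pairs form a perfect matching of the prism. What remains is either the two triangles or, up to symmetry, the $6$-cycle missing $\{1,5\},\{2,4\},\{3,6\}$; your cases (A) and (B) are the paper's $\langle 1\rangle$ and $\langle 2\rangle$. The gap is the geometric half, which is only a plan (``should give a contradiction'', ``push that edge's position around''). The side-of-line parity you propose does not deliver it by itself. In case (B), parity at $e_3$ says only that $e_6$ meets the line of $e_3$ outside the segment $e_3$, and parity at $e_6$ says the same with the roles swapped. Two segments whose supporting lines meet outside both are perfectly consistent, so this information alone yields no contradiction. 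In case (A) your parity is actually stronger than you state. There $e_{i+2}$ and $e_{i-2}$ already supply two crossings, and a segment meets a line at most once, so $e_{i+3}$ misses the line of $e_i$ entirely.

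The missing idea is a simple local lemma about four consecutive edges: if $e_i$ crosses $e_{i+2}$ and $e_{i+1}$ crosses $e_{i+3}$, then $e_{i+3}$ crosses $e_i$. Write $e_i=P_0P_1$, $e_{i+1}=P_1P_2$, $e_{i+2}=P_2P_3$, $X=e_i\cap e_{i+2}$ and $Y=e_{i+1}\cap e_{i+3}$. Consider the open segment $P_3Y\subset e_{i+3}$.
\begin{itemize}
\item It lies on $P_1$'s side of line $P_2X$, because $P_3$ is on that line and $Y$ is on $P_1$'s side.
\item It lies on $X$'s side of line $P_1P_2$, because $P_3$ lies beyond $X$ on the ray from $P_2$ through $X$.
\item It meets line $P_1X$, because $P_3$ and $Y$ lie on opposite sides of that line.
\end{itemize}
Together these three facts place the meeting point inside the open segment $XP_1\subset e_i$.

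With this lemma both cases close at once. In case (A), any four consecutive edges violate it, since $\{e_i,e_{i+3}\}$ is a deleted rung. In case (B), the window $e_3,e_4,e_5,e_6$ violates it, since $\{3,5\},\{4,6\}\in G$ but $\{3,6\}\notin G$. This is exactly the paper's argument: its picture shows $e_1,e_2,e_3$ with $e_4$ unable to reach $e_2$ without crossing $e_1$. It makes both your orientation tracking and your order-type fallback unnecessary.
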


\begin{mproof}
Assume that such polyline $L$ exists. Then each of its edges intersects all other edges except for itself, two neighbors and exactly one other edge. Index these edges in the order of the polyline traversal by the numbers $0$ to $5$, considering these indices as residues modulo~$6$. Then the edges can be denoted $e_k$ for~$k \in \Z/6\Z$, so that for every residue $k \mmod 6$ there is residue $k+d_k$ with $d_k=2$, $3$, or $4$ such that edges $e_k$ and $e_{k+d_k}$ do not intersect.

Now consider graph $G^*$ with six vertices (residues modulo~$6$), in which every vertex (polyline edge) $e_k$ is connected with exactly one other vertex---namely, with~$e_{k+d_k}$. These edges form a perfect matching in complete graph~$K_6$.

It is easy to see now that graph $G^*$ has one of the two following forms (there are only two essentially different options):

\vmskip

\centerline{%
\setlength{\unitlength}{2.8mm}

\begin{picture}(2,10)
	\put(0.00, 4.50){\eqtlabel{1}}
\end{picture}
\kern3ex
\begin{picture}(12,10)
	\put(10.00, 5.50){\circle*{0.6}}
	\put(7.50, 9.35){\circle*{0.6}}
	\put(2.50, 9.35){\circle*{0.6}}
	\put(0.00, 5.50){\circle*{0.6}}
	\put(2.50, 0.75){\circle*{0.6}}
	\put(7.50, 0.75){\circle*{0.6}}
	\path(10.00, 5.50)(0.00, 5.50)
	\path(7.50, 9.35)(2.50, 0.75)
	\path(2.50, 9.35)(7.50, 0.75)
\end{picture}
\kern8ex
\begin{picture}(2,10)
	\put(0.00, 4.50){\eqtlabel{2}}
\end{picture}
\kern3ex
\begin{picture}(12,10)
	\put(10.00, 5.00){\circle*{0.6}}
	\put(7.50, 9.35){\circle*{0.6}}
	\put(2.50, 9.35){\circle*{0.6}}
	\put(0.00, 5.00){\circle*{0.6}}
	\put(2.50, 0.65){\circle*{0.6}}
	\put(7.50, 0.65){\circle*{0.6}}
	\path(10.00, 5.00)(0.00, 5.00)
	\path(7.50, 9.35)(7.50, 0.65)
	\path(2.50, 9.35)(2.50, 0.65)
\end{picture}
}

\vmskip

In these drawings the vertices are arranged along the circle in the natural order of indexing modulo~$6$. Then graph $G$, showing edge intersections (see lemma~$1$), must look like this:

\vmskip

\centerline{%
\setlength{\unitlength}{2.8mm}
\begin{picture}(2,10)
	\put(0.00, 4.50){\eqtlabel{1}}
\end{picture}
\kern3ex
\begin{picture}(10,10)
	\put(10.00, 5.00){\circle*{0.6}}
	\put(7.50, 9.35){\circle*{0.6}}
	\put(2.50, 9.35){\circle*{0.6}}
	\put(0.00, 5.00){\circle*{0.6}}
	\put(2.50, 0.65){\circle*{0.6}}
	\put(7.50, 0.65){\circle*{0.6}}
	\path(0.00, 5.00)(7.50, 0.65)
	\path(0.00, 5.00)(7.50, 9.35)
	\path(10.00, 5.00)(2.50, 0.65)
	\path(10.00, 5.00)(2.50, 9.35)
	\path(7.50, 9.35)(7.50, 0.65)
	\path(2.50, 9.35)(2.50, 0.65)
	\put(-1.30, 3.30){$e_1$}
	\put(0.80, 1.50){$e_2$}
	\put(7.80, 1.50){$e_3$}
	\put(10.00, 3.30){$e_4$}
\end{picture}
\kern10ex
\begin{picture}(2,10)
	\put(0.00, 4.50){\eqtlabel{2}}
\end{picture}
\kern3ex
\begin{picture}(10,10)
	\put(10.00, 5.00){\circle*{0.6}}
	\put(7.50, 9.35){\circle*{0.6}}
	\put(2.50, 9.35){\circle*{0.6}}
	\put(0.00, 5.00){\circle*{0.6}}
	\put(2.50, 0.65){\circle*{0.6}}
	\put(7.50, 0.65){\circle*{0.6}}
	\path(0.00, 5.00)(7.50, 0.65)
	\path(0.00, 5.00)(7.50, 9.35)
	\path(10.00, 5.00)(2.50, 0.65)
	\path(10.00, 5.00)(2.50, 9.35)
	\path(7.50, 9.35)(2.50, 0.65)
	\path(2.50, 9.35)(7.50, 0.65)
	\put(-1.30, 3.30){$e_1$}
	\put(0.80, 1.50){$e_2$}
	\put(7.80, 1.50){$e_3$}
	\put(10.00, 3.30){$e_4$}
\end{picture}
}

\vmskip

In each of these two cases, the subgraph formed by the four bottom vertices of graph $G$ is the same. We can denote these four vertices (edges) by $e_1$, $e_2$, $e_3$, and $e_4$---then in  this $4$-tuple of edges only edges in pairs $(e_1, e_3)$ and~$(e_2, e_4)$ intersect. These four edges form a contiguous group, and therefore, the triple $e_1$, $e_2$, $e_3$ must form a configuration shown in the left figure below.

\vmskip

\centerline{%
\setlength{\unitlength}{2.8mm}
\begin{picture}(12,10)
	\put(4.00, 8.00){\circle*{0.5}}
	\put(7.50, 1.00){\circle*{0.5}}
	\put(2.00, 5.00){\circle*{0.5}}
	\put(9.00, 9.00){\circle*{0.5}}
	\put(4.00, 4.50){$e_1$}
	\put(3.00, 2.00){$e_2$}
	\put(5.70, 8.70){$e_3$}
	\color{blue}
	\linethickness{0.33mm}
	\path(4.00, 8.00)(7.50, 1.00)(2.00, 5.00)(9.00, 9.00)
\end{picture}
\kern6ex
\begin{picture}(12,10)
	\put(4.00, 8.00){\circle*{0.5}}
	\put(7.50, 1.00){\circle*{0.5}}
	\put(2.00, 5.00){\circle*{0.5}}
	\put(9.00, 9.00){\circle*{0.5}}
	\put(4.00, 4.50){$e_1$}
	\put(3.00, 2.00){$e_2$}
	\put(5.70, 8.70){$e_3$}
	\put(7.90, 5.00){$e_4$}
	\color{blue}
	\linethickness{0.33mm}
	\path(4.00, 8.00)(7.50, 1.00)(2.00, 5.00)(9.00, 9.00)
	\dashline{0.66}[0.66](9.00,9.00)(5.00,0.80)	
\end{picture}
}

But in this configuration it is clearly impossible to draw edge $e_4$ which intersects $e_2$ but not~$e_1$ (see the right figure above). This yields the contradiction, and, therefore, both cases \eqtlabel{1} and \eqtlabel{2} are impossible.
\end{mproof}

The statement of the last lemma can be generalized to the following neat proposition.

\begin{lemma}
\hlabel{thm:nnminus4}
A polyline of type \nck{n}{n-4} exists if and only if $n$ is divisible by~$4$.\ftnnote{Naturally, it is assumed that both parameters $n$ and $n-4$ are positive.}
\end{lemma}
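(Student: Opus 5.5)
The plan has two halves: a construction for every $n$ divisible by $4$, and an impossibility argument for every other $n$.

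\textbf{Reduction.} First we dispose of the easy exclusions. If $n$ is odd, then $n-4$ is odd as well, so Lemma~\ref{thm:nk_must_be_even} rules this out. Hence only $n\equiv 2 \pmod 4$ has to be excluded. The case $n=6$ is exactly Lemma~\ref{thm:nk62}, and the argument should imitate it. In a \nck{n}{n-4} polyline every edge $e_i$ misses exactly one edge besides itself and its two neighbours. So, as in the proof of Lemma~\ref{thm:nk62}, the ``non-crossing'' pairs $\{e_i, e_{i+d_i}\}$ with $2\lte d_i\lte n-2$ form a perfect matching $M$ on $\Z/n\Z$. Every other pair of non-adjacent edges crosses.

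\textbf{Necessity via a parity count.} Fix an edge $e_i=AB$ and its partner $e_{p(i)}$ in $M$. The remaining path, from the far end of $e_{i+1}$ to the far end of $e_{i-1}$, consists of $n-3$ edges. All of them except $e_{p(i)}$ cross the segment $AB$, hence the line $AB$. Let $\sigma_i\in\{0,1\}$ record whether $e_{p(i)}$ crosses the line $AB$ (necessarily outside the segment). Let $\tau_i$ record whether the far ends of $e_{i-1}$ and $e_{i+1}$ lie on opposite sides of $AB$. As in Lemma~\ref{thm:nnminus3}, the parity of crossings of a path with a line gives
$$
\tau_i \equiv (n-4)+\sigma_i \equiv \sigma_i \pmod 2 \pp
$$
Next, $\tau_i$ is determined locally by whether $e_{i-1}$ and $e_{i+1}$ cross. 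These two edges are non-adjacent when $n\gte 6$, so they cross unless $\{i-1,i+1\}\in M$. Thus $\tau_i=1$ exactly when $\{i-1,i+1\}\in M$. This couples the local matching structure (pairs at distance $2$) with the geometric quantities $\sigma_i$.

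The plan is then to sum these relations over all $i$. The sum of $\sigma_i$ over a matched pair $\{i,p(i)\}$ is controlled by the geometry of two disjoint segments: for two disjoint segments, at most one of them (in fact exactly one, or neither when they are ``parallel-like'') has its line crossed by the other. The aim is a global congruence that reduces to $n/2\equiv 0 \pmod 2$. I expect this bookkeeping to be the main obstacle. If it fails, the fallback is to generalize the four-consecutive-edges contradiction of Lemma~\ref{thm:nk62}: find a contiguous block $e_1,e_2,e_3,e_4$ with $\{e_1,e_2\}$-type local pattern forced by $M$, and show such a block must exist whenever $n\equiv 2\pmod 4$ by a counting argument on the matching.

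\textbf{Sufficiency.} For $n=4$ a convex quadrilateral works, since $k=0$. For $n=4m$ with $m\gte 2$, I would start from the star of the regular $(4m-1)$-gon formed by its longest diagonals; it has type \nck{4m-1}{4m-4} by Lemma~\ref{thm:nnminus3}. The plan is to modify it by splitting one vertex into a short edge and sliding the two vertices apart. The goal is that the new edge crosses all but three edges, while each old edge loses exactly one crossing, and these lost crossings pair up correctly. If this local surgery does not balance, I would instead give an explicit inductive gadget: insert four new edges near a vertex so that each new edge crosses $n$ old edges, and each old edge gains exactly $4$ crossings. This takes \nck{4m}{4m-4} to \nck{4m+4}{4m}, and every crossing count is checked directly.
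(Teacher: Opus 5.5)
\textbf{Necessity.} Your reduction to $n\equiv 2\pmod 4$ and the perfect matching $M$ of non-crossing pairs are correct and match the paper. The gap is that the relation $\tau_i\equiv\sigma_i$ is never turned into a contradiction: you flag the global congruence as the expected obstacle, and the fallback is not specified.

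Your claim that $\tau_i=1$ \emph{exactly} when $\{i-1,i+1\}\in M$ is also unjustified. Only one direction holds: far ends on opposite sides of line $AB$ force $e_{i-1},e_{i+1}$ to be disjoint. In general, two non-crossing edges $e_{i-1},e_{i+1}$ can have their far ends on the same side (a ``cup'').

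More fundamentally, the line through $e_i$ is the wrong line to count against, because the unknown $\sigma_i$ absorbs the information you need. Summing your relations gives $\sum_i\sigma_i\equiv\sum_i\tau_i\equiv 0\pmod 2$. Here $\sum_i\tau_i$ just counts the changes of turning direction around a closed polyline, so it is automatically even. This does not involve $n$ at all.

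The missing idea is to count crossings with a line $\ell$ \emph{strictly separating} a non-crossing pair $e_i,e_j$.
\begin{itemize}
\item Every other edge either crosses $e_i$ or shares a vertex with it, and likewise for $e_j$. So it has points strictly on both sides of $\ell$ and crosses $\ell$ exactly once.
\item The edges $e_{i+1},\dots,e_{j-1}$ therefore join a point on one side of $\ell$ to a point on the other. Hence $j-i-1$ is odd, i.e.\ $i\equiv j\pmod 2$.
\item So $M$ pairs odd indices with odd indices, which forces $n/2$ to be even. This is impossible for $n=4m+2$. The paper phrases the same fact as the sum of all indices, $(2m+1)(4m+3)$, being odd.
\end{itemize}

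\textbf{Sufficiency.} This half is also only a plan. The vertex-splitting of the $(4m-1)$-star is not specified, and its crossing counts are not checked. The four-edge gadget cannot work as described: any inserted edge adjacent to an old edge shares a vertex with it, so it cannot cross all $n$ old edges.

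No surgery is needed. Take the star $\wstar(4m,2m-1)$, formed by all diagonals of span $2m-1$ of a regular $4m$-gon. It is a single closed polyline because $\gcd(4m,2m-1)=1$. By \kcb{1}, each of its edges crosses exactly $2(2m-2)=4m-4$ others.
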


\begin{mproof}
We begin with constructing an example of such polyline for $n = 4m > 4$. Then $n-4 = 4m-4 = 2(2m-2)$. Consider a regular polygon $M$ with $4m$ vertices and draw all diagonals of ``span'' equal to $2m-1$---in other words, diagonals that connect two vertices separated by a chain of exactly $2m-1$ consecutive sides of~$M$ (the sides themselves are not drawn). Since the number of vertices in $M$ is co-prime with this fixed span, it follows that these diagonals form one closed polyline. Observe that each drawn diagonal intersects exactly $2(2m-2)$ other such diagonals.

It remains to prove that for all other values of $n$ a polyline of the desired type does not exist. For odd $n$, both parameters $n$ and $n-4$ are odd, and from lemma $\ref{thm:nk_must_be_even}$ such a polyline cannot exist. Therefore, the only remaining case is $n = 4m+2$.

Assuming that such polyline $\mcl$ exists, index the edges of $\mcl$ by numbers $1$ through $4m+2$ in the order they are connected inside the polyline. Since every edge in $\mcl$ intersects all other edges except itself, its two neighbors, and exactly one other edge, it follows that the edges are split into pairs of non-adjacent non-intersecting segments. Consider an arbitrary pair $e_i$ and $e_j$ of such edges (we may assume $i < j$). Since they do not intersect, there exists a straight line $\ell$ that separates them. 

All other edges in $\mcl$ must intersect line~$\ell$. Indeed, let $e$ be some edge of our polyline, different from $e_i$ and~$e_j$. If $e$ is not adjacent to edge $e_i$, then $e$ has to intersect it. And if $e$ is adjacent to~$e_i$, then they obviously have a common point. Same is true for $e$ and $e_j$, and therefore, in either case edge $e$ has a common point with both $e_i$ and~$e_j$. Hence, it must intersect line~$\ell$.

This means that the chain of edges $e_{i}$, $e_{i+1}$, ..., $e_{j-1}$, $e_{j}$, which connects $e_i$ and $e_{j}$, intersects line $\ell$ exactly $j-i-1$ times. That number has to be odd, since $e_i$ and $e_j$ lie to the opposite sides of $\ell$---it follows that indices $i$ and $j$ have the same parity. Therefore, for every pair of non-adjacent non-intersecting edges the sum of their indices is even---hence, the sum of all indices is even. That sum, however, equals
$$
1 + 2 + ... + (4m+1) + (4m+2) = \frac 12 (4m+2)(4m+3) = (2m+1)(4m+3) \pc
$$
which is an odd number. This contradiction proves the lemma.
\end{mproof}

\section{\texorpdfstring{Computing $\mcc_k$: simple cases $k=1$ and $k=2$}{Simple cases: computing C-1 and C-2}}
\hlabel{sec:c12}

If the self-intersection index equals $1$, then the number of polyline's edges must be even since they are split into pairs of intersecting segments. Clearly, the number of edges cannot be $2$ or~$4$. For any larger even number of edges, such a polyline can be constructed. Below we show examples of these polylines for $n=6$, $n=8$, and~$n=10$.

\vlskip

\centerline{%
\setlength{\unitlength}{3mm}
\begin{picture}(12,10)
	\put(0,0){\circle*{0.6}}
	\put(9,2){\circle*{0.6}}
	\put(6,10){\circle*{0.6}}
	\put(3,2){\circle*{0.6}}
	\put(12,0){\circle*{0.6}}
	\put(6,7){\circle*{0.6}}
	\color{blue}
	\linethickness{0.33mm}
	\path(0,0)(9,2)(6,10)(3,2)(12,0)(6,7)(0,0)
\end{picture}
\kern6ex
\begin{picture}(12,10)
	\put(0,0){\circle*{0.6}}
	\put(8,1){\circle*{0.6}}
	\put(3,2){\circle*{0.6}}
	\put(6,10){\circle*{0.6}}
	\put(9,2){\circle*{0.6}}
	\put(4,1){\circle*{0.6}}
	\put(12,0){\circle*{0.6}}
	\put(6,7){\circle*{0.6}}
	\color{blue}
	\linethickness{0.33mm}
	\path(0,0)(8,1)(3,2)(6,10)(9,2)(4,1)(12,0)(6,7)(0,0)
\end{picture}
\kern6ex
\begin{picture}(12,10)
	\put(1,6.5){\circle*{0.6}}
	\put(4.5,2.5){\circle*{0.6}}
	\put(9,0){\circle*{0.6}}
	\put(8.5,5.5){\circle*{0.6}}
	\put(6,10){\circle*{0.6}}
	\put(3.5,5.5){\circle*{0.6}}
	\put(3,0){\circle*{0.6}}
	\put(7.5,2.5){\circle*{0.6}}
	\put(11,6.5){\circle*{0.6}}
	\put(6,8){\circle*{0.6}}
	\color{blue}
	\linethickness{0.33mm}
	\path(1,6.5)(4.5,2.5)(9,0)(8.5,5.5)(6,10)(3.5,5.5)(3,0)(7.5,2.5)(11,6.5)(6,8)(1,6.5)
\end{picture}
}

\vmskip

Now we can construct the required polyline of any even size $n \gte 12$ by ``gluing'' two smaller polylines with the same self-intersection index. To do that, take polyline $L_1$ with $6$ edges and polyline $L_2$ with $n-6$ edges and position them on the plane so that they do not intersect each other except in one common vertex. Double that vertex as shown in the figure. In this way, we have merged these two polylines together into one polyline with $n$ edges.

\vmskip

\centerline{
\setlength{\unitlength}{3mm}
\begin{picture}(12,5)
	\put(6,2){\circle*{0.6}}
	\put(7,2){\circle*{0.6}}
	\put(0,2){$L_1$}
	\put(9,3){$L_2$}
	\color{blue}
	\linethickness{0.33mm}
	\path(0,0)(6,2)(1,4)
	\path(11,1)(7,2)(9,5)
\end{picture}
\kern2ex
\begin{picture}(3,5)
	\put(0,2){$\longrightarrow$}
\end{picture}
\kern2ex
\begin{picture}(12,5)
	\put(7,2){\circle*{0.6}}
	\put(0,2){$L_1$}
	\put(9,3){$L_2$}
	\color{blue}
	\linethickness{0.33mm}
	\path(0,0)(7,2)(11,1)
	\path(1,4)(7,2)(9,5)
\end{picture}
\kern2ex
\begin{picture}(3,5)
	\put(0,2){$\longrightarrow$}
\end{picture}
\kern2ex
\begin{picture}(12,5)
	\put(7,2.5){\circle*{0.6}}
	\put(7,1.5){\circle*{0.6}}
	\color{blue}
	\linethickness{0.33mm}
	\path(0,0)(7,1.5)(11,1)
	\path(1,4)(7,2.5)(9,5)
\end{picture}
}

\vmskip

Hence, $\mcc_1 = \{n \,|\, n \isdiv 2\pc\ n \gte 6 \}$.

\xxx

If the self-intersection index $k$ equals $2$, then the answer is 
$$
\mcc_2 = \{5\} \cup \{n \,|\, n \gte 7\} \pp
$$

Obviously, for $k=2$, $n < 5$ polylines of type \nck{n}{2} do not exist. Lemma $\ref{thm:nk62}$ tells us that the same is true for $n=6$ as well. For any odd $n$, beginning with five, an example of a type \nck{n}{2} polyline is quite simple---it is enough to take the vertices of a regular polygon with $n$ sides and connect them by diagonals of span~$2$. 

\BeginPic
\wtext{
\setlength{\unitlength}{2.8mm}
\begin{picture}(12,11)(-1,0)
	\put(1,5){\circle*{0.6}}
	\put(11,5){\circle*{0.6}}
	\put(3,0){\circle*{0.6}}
	\put(3,10){\circle*{0.6}}
	\put(9,0){\circle*{0.6}}
	\put(9,10){\circle*{0.6}}
	\put(6,6.5){\circle*{0.6}}
	\put(6,3.5){\circle*{0.6}}
	\color{blue}
	\linethickness{0.33mm}
	\path(1,5)(9,10)(6,3.5)(3,10)(11,5)(3,0)(6,6.5)(9,0)(1,5)
\end{picture}
}{w=0.24}

Further, for an arbitrary even $n \gte 10$ an example can be constructed by gluing together two copies of a type \nck{\frac n2}{2} polyline or of two polylines of types \nck{\frac{n}2-1}{2} and~\nck{\frac n2+1}{2}. Indeed, it is easy to see that any two polylines with the same self-intersection index can be glued together using the procedure we have already described.

Finally, an example of a type \nck{8}{2} polyline is shown in the figure. Therefore, polylines of type \nck{n}{2} exist if and only if $n = 5$ or~$n\gte 7$.\ftnnote{See also articles \cite{akk_2003} and~\cite{adbavg_2019}.}

\EndPic

\section{Various construction procedures}
\hlabel{sec:ks}

Some of the polyline construction ideas we used in the previous section can be generalized for the cases of an arbitrary self-intersection index~$k$.

\vsskip

\fxkcb{1} Assuming $n \gte 2k+1$ and $\gcd(n,k)=1$, it is possible to construct a polyline of type~\nck{n}{2(k-1)}. Abandoning the formality, we can write it as:
$$
n \gte 2k+1 \pc\ \gcd(n,k)=1 \implies \nck{n}{2(k-1)} \pp
$$

Indeed, consider vertices of a regular polygon with $n$ sides and connect them by diagonals of span $k$ (that is, diagonals connecting pairs of vertices separated by exactly $k-1$ other vertices). Since $n$ and $k$ are co-prime, these segments form one closed polyline, each edge of which intersects exactly $2(k-1)$ other edges.

We will call these polylines \textit{star-sha\-pes} (or simply, \textit{stars}) of type $\wstar(n,k)$ (that is, $n$ vertices of a regular polygon connected by all diagonals of span~$k$).

\vsskip

\fxkcb{2} If there exists a polyline of type \nck{n}{pk}, then there exists a polyline of type~\nck{pn}{k}. In short,
$$
\nck{n}{pk} \implies \nck{pn}{k} \pp
$$

To prove that, consider some \nck{n}{pk} polyline. Clearly, we may assume that all self-intersection points are pairwise distinct. If that is not so, it would suffice to slightly tweak the vertices of the polyline while keeping the \nck{n}{pk} condition intact. Now, mark every edge of this polyline with $p-1$ ``ticks'', splitting all the self-intersection points on this edge into $p$ groups of $k$ points in each group. After that, ``break and slightly skew'' each edge at every tick. Then the obtained closed polyline will satisfy property~\nck{pn}{k}.

\vsskip

\hlabel{fig:k2_15_6}
\centerline{%
\setlength{\unitlength}{6mm}
\begin{picture}(10,10)(-5,-6)
  
  
  \color{blue}
  \linethickness{0.33mm}
  
  \qbezier(0,4)(1.990,1.791)(3.980,-0.418)
  \qbezier(1.624,3.649)(2.544,0.824)(3.464,-2.000)
  \qbezier(2.970,2.676)(2.660,-0.280)(2.351,-3.236)
  \qbezier(3.804,1.236)(2.318,-1.339)(0.832,-3.914)
  \qbezier(3.980,-0.418)(1.574,-2.166)(-0.832,-3.914)
  \qbezier(3.464,-2.000)(0.557,-2.618)(-2.351,-3.236)
  \qbezier(2.351,-3.236)(-0.557,-2.618)(-3.464,-2.000)
  \qbezier(0.832,-3.914)(-1.574,-2.166)(-3.980,-0.418)
  \qbezier(-0.832,-3.914)(-2.318,-1.339)(-3.804,1.236)
  \qbezier(-2.351,-3.236)(-2.660,-0.280)(-2.970,2.676)
  \qbezier(-3.464,-2.000)(-2.544,0.824)(-1.624,3.649)
  \qbezier(-3.980,-0.418)(-1.990,1.791)(0,4)
  \qbezier(-3.804,1.236)(-1.090,2.442)(1.624,3.649)
  \qbezier(-2.970,2.676)(0,2.676)(2.970,2.676)
  \qbezier(-1.624,3.649)(1.090,2.442)(3.804,1.236)
  
  \color{black}
  \put(0,4){\circle*{0.25}}
  \put(1.624,3.649){\circle*{0.25}}
  \put(2.970,2.676){\circle*{0.25}}
  \put(3.804,1.236){\circle*{0.25}}
  \put(3.980,-0.418){\circle*{0.25}}
  \put(3.464,-2.000){\circle*{0.25}}
  \put(2.351,-3.236){\circle*{0.25}}
  \put(0.832,-3.914){\circle*{0.25}}
  \put(-0.832,-3.914){\circle*{0.25}}
  \put(-2.351,-3.236){\circle*{0.25}}
  \put(-3.464,-2.000){\circle*{0.25}}
  \put(-3.980,-0.418){\circle*{0.25}}
  \put(-3.804,1.236){\circle*{0.25}}
  \put(-2.970,2.676){\circle*{0.25}}
  \put(-1.624,3.649){\circle*{0.25}}
  
  \put(-3,-5.5){\kcb{1}:\ \nck{15}{6}}
  
\end{picture}
\kern4ex
\begin{picture}(0,10)
	\put(0,6){$\longrightarrow$}
\end{picture}
\kern6ex
\begin{picture}(10,10)(-5,-6)
  
  \color{blue}
  \linethickness{0.33mm}
  
  \qbezier(0,4)(1.990,1.791)(3.980,-0.418)
  \qbezier(1.624,3.649)(2.544,0.824)(3.464,-2.000)
  \qbezier(2.970,2.676)(2.660,-0.280)(2.351,-3.236)
  \qbezier(3.804,1.236)(2.318,-1.339)(0.832,-3.914)
  \qbezier(3.980,-0.418)(1.574,-2.166)(-0.832,-3.914)
  \qbezier(3.464,-2.000)(0.557,-2.618)(-2.351,-3.236)
  \qbezier(2.351,-3.236)(-0.557,-2.618)(-3.464,-2.000)
  \qbezier(0.832,-3.914)(-1.574,-2.166)(-3.980,-0.418)
  \qbezier(-0.832,-3.914)(-2.318,-1.339)(-3.804,1.236)
  \qbezier(-2.351,-3.236)(-2.660,-0.280)(-2.970,2.676)
  \qbezier(-3.464,-2.000)(-2.544,0.824)(-1.624,3.649)
  \qbezier(-3.980,-0.418)(-1.990,1.791)(0,4)
  \qbezier(-3.804,1.236)(-1.090,2.442)(1.624,3.649)
  \qbezier(-2.970,2.676)(0,2.676)(2.970,2.676)
  \qbezier(-1.624,3.649)(1.090,2.442)(3.804,1.236)
  
  \color{black}
  \put(0,4){\circle*{0.25}}
  \put(1.624,3.649){\circle*{0.25}}
  \put(2.970,2.676){\circle*{0.25}}
  \put(3.804,1.236){\circle*{0.25}}
  \put(3.980,-0.418){\circle*{0.25}}
  \put(3.464,-2.000){\circle*{0.25}}
  \put(2.351,-3.236){\circle*{0.25}}
  \put(0.832,-3.914){\circle*{0.25}}
  \put(-0.832,-3.914){\circle*{0.25}}
  \put(-2.351,-3.236){\circle*{0.25}}
  \put(-3.464,-2.000){\circle*{0.25}}
  \put(-3.980,-0.418){\circle*{0.25}}
  \put(-3.804,1.236){\circle*{0.25}}
  \put(-2.970,2.676){\circle*{0.25}}
  \put(-1.624,3.649){\circle*{0.25}}
  
  \put(1.990,1.791){\circle*{0.25}}
  \put(2.544,0.824){\circle*{0.25}}
  \put(2.660,-0.280){\circle*{0.25}}
  \put(2.318,-1.339){\circle*{0.25}}
  \put(1.574,-2.166){\circle*{0.25}}
  \put(0.557,-2.618){\circle*{0.25}}
  \put(-0.557,-2.618){\circle*{0.25}}
  \put(-1.574,-2.166){\circle*{0.25}}
  \put(-2.318,-1.339){\circle*{0.25}}
  \put(-2.660,-0.280){\circle*{0.25}}
  \put(-2.544,0.824){\circle*{0.25}}
  \put(-1.990,1.791){\circle*{0.25}}
  \put(-1.090,2.442){\circle*{0.25}}
  \put(0,2.676){\circle*{0.25}}
  \put(1.090,2.442){\circle*{0.25}}
  
  \put(-3,-5.5){\kcb{3}:\ \nck{30}{3}}
  
\end{picture}
}

\vsskip

\fxkcb{3} Applying both procedures \kcb{1} and \kcb{2}, we can also obtain
$$
n \gte 2k+1 \pc\ \gcd(n,k)=1 \implies \nck{2n}{k-1} \pc
$$
or equivalently,
$$
n \gte 2k+3 \pc\ \gcd(n,k+1)=1 \implies \nck{2n}{k} \pp
$$

\vsskip

\fxkcb{4} A slightly better result can be achieved with a very similar procedure. Consider two regular polygons with $n$ sides, one of which is obtained from the other by homothety $H$ with sufficiently large ratio (it is enough to set ratio to~$n$). Choose now two non-negative integers $p, q$, both not exceeding $(n+1)/4$, and such that $\gcd(n, p+q) = 1$.

\vsskip

\centerline{%
\hlabel{k4ex1}
\setlength{\unitlength}{3.0mm}
\begin{picture}(20,25)
  \color{blue}
  \linethickness{0.33mm}
  \path(15.00, 14.00)(11.77, 23.74)
  \path(14.62, 14.78)(4.99, 18.33)
  \path(13.77, 14.97)(4.99, 9.66)
  \path(13.09, 14.43)(11.77, 4.25)
  \path(13.09, 13.56)(20.23, 6.18)
  \path(13.77, 13.02)(24.00, 14.00)
  \path(14.62, 13.21)(20.23, 21.81)
  \path(24.00, 14.00)(13.77, 14.97)
  \path(20.23, 21.81)(13.09, 14.43)
  \path(11.77, 23.74)(13.09, 13.56)
  \path( 4.99, 18.33)(13.77, 13.02)
  \path( 4.99,  9.66)(14.62, 13.21)
  \path(11.77,  4.25)(15.00, 14.00)
  \path(20.23,  6.18)(14.62, 14.78)
  %
  \color{black}
  \put(15.00, 14.00){\circle*{0.33}}
  \put(14.62, 14.78){\circle*{0.33}}
  \put(13.77, 14.97){\circle*{0.33}}
  \put(13.09, 14.43){\circle*{0.33}}
  \put(13.09, 13.56){\circle*{0.33}}
  \put(13.77, 13.02){\circle*{0.33}}
  \put(14.62, 13.21){\circle*{0.33}}
  %
  %
  \put(24.00, 14.00){\circle*{0.33}}
  \put(20.23, 21.81){\circle*{0.33}}
  \put(11.77, 23.74){\circle*{0.33}}
  \put( 4.99, 18.33){\circle*{0.33}}
  \put( 4.99,  9.66){\circle*{0.33}}
  \put(11.77,  4.25){\circle*{0.33}}
  \put(20.23,  6.18){\circle*{0.33}}
  \put(8,2){\kcb{4}: $n=7, p=q=2$, \nck{14}{3}}
\end{picture}
\kern3ex
\begin{picture}(30,25)
  \put(18.33, 14.66){\circle*{0.5}}
  \put(16.93, 17.53){\circle*{0.5}}
  \put(13.81, 18.23){\circle*{0.5}}
  \put(11.33, 16.24){\circle*{0.5}}
  \put(11.33, 13.05){\circle*{0.5}}
  \put(13.81, 11.07){\circle*{0.5}}
  \put(16.93, 11.77){\circle*{0.5}}
  \color{blue}
  \linethickness{0.33mm}
  \path(18.33, 14.66)(14.77, 25.37)
  \path(25.08, 17.16)(13.81, 18.23)
  \path(16.93, 17.53)(06.34, 21.45)
  \path(19.17, 24.34)(11.33, 16.24)
  \path(13.81, 18.23)(04.18, 12.39)
  \path( 9.86, 24.23)(11.33, 13.05)
  \path(11.33, 16.24)(09.86, 05.06)
  \path( 4.18, 16.90)(13.81, 11.07)
  \path(11.33, 13.05)(19.17, 04.95)
  \path( 6.34, 07.88)(16.93, 11.77)
  \path(13.81, 11.07)(25.08, 12.13)
  \path(14.77, 03.92)(18.33, 14.66)
  \path(16.93, 11.77)(23.10, 21.23)
  \path(23.10, 08.06)(16.93, 17.53)
\end{picture}
}

\vmskip

Begin with indexing all the vertices of the larger $n$-gon in counterclockwise order, denoting them $A_0$, $A_1$, ..., $A_{n-1}$, and treating these indices as residues modulo~$n$. Denote the corresponding vertices of the smaller $n$-gon by $B_0$, $B_1$, ..., and~$B_{n-1}$. 

Now, for every residue $i$ modulo $n$ connect point $B_i$ with point~$A_{(i+p)\mmod n}$, and every point $A_i$---with point~$B_{(i+q)\mmod n}$. It is clear that all these segments form one closed polyline of type~\nck{2n}{p+q-1}. The figure above shows an example of \nck{14}{3} polyline (for $n = 7$, $p=q=2$,~$k=3$).

If we set $k = p+q-1$, then
$$
n \gte 2k+1 \pc\ \gcd(n,k+1)=1 \implies \nck{2n}{k} \pp
$$
In other words, comparing this with procedure \kcb{3}, we obtain one additional polyline with a fixed self-intersection index. Namely, for $n = 2k+1$, we are getting a polyline of type~\nck{4k+2}{k}.

\xxx

Another interesting result can be achieved by varying the homothety ratio~$h$. Let us set $p=q$ and change the restriction $p \lte (n+1)/4$ to $p \lte n/2$. Then for $\gcd(n, p) = 1$ and more or less arbitrary $h \gte 1$, as long as the points of our collection are in a general position, we will obtain polyline \pdph{n}{p}{h} with $2n$ vertices and constant self-intersection index.

Due to the obvious rotational and axial symmetry, that index will always be either zero or an odd number.

The figure below has two examples of polylines \pdph{7}{2}{1.5} (type \nck{14}{5}) and \pdph{7}{3}{2.5} (type~\nck{14}{9}).

\vtskip

\centerline{%
\hlabel{fig:k4ex2}
\setlength{\unitlength}{7mm}
\begin{picture}(10,12)
  \color{black}
  \put(8.12, 8.91){\circle*{0.23}}
  \put(4.26, 1.75){\circle*{0.23}}
  \put(3.89, 9.87){\circle*{0.23}}
  \put(7.08, 2.39){\circle*{0.23}}
  \put(0.50, 7.17){\circle*{0.23}}
  \put(8.33, 5.00){\circle*{0.23}}
  \put(0.50, 2.83){\circle*{0.23}}
  \put(7.08, 7.61){\circle*{0.23}}
  \put(3.89, 0.13){\circle*{0.23}}
  \put(4.26, 8.25){\circle*{0.23}}
  \put(8.12, 1.09){\circle*{0.23}}
  \put(2.00, 6.45){\circle*{0.23}}
  \put(10.00, 5.00){\circle*{0.23}}
  \put(2.00, 3.55){\circle*{0.23}}
  \put(0,11){\pdph{7}{2}{1.5} \pc\ type \nck{14}{5}}
  %
  \color{blue}
  \linethickness{0.33mm}
  \path(8.12, 8.91)(7.08, 2.39)
  \path(3.89, 9.87)(8.33, 5.00)
  \path(0.50, 7.17)(7.08, 7.61)
  \path(0.50, 2.83)(4.26, 8.25)
  \path(3.89, 0.13)(2.00, 6.45)
  \path(8.12, 1.09)(2.00, 3.55)
  \path(10.00, 5.00)(4.26, 1.75)
  \path(8.12, 8.91)(2.00, 6.45)
  \path(3.89, 9.87)(2.00, 3.55)
  \path(0.50, 7.17)(4.26, 1.75)
  \path(0.50, 2.83)(7.08, 2.39)
  \path(3.89, 0.13)(8.33, 5.00)
  \path(8.12, 1.09)(7.08, 7.61)
  \path(10.00, 5.00)(4.26, 8.25)
\end{picture}
\kern3ex
\begin{picture}(10,12)
  \color{black}
  \put(2.83, 0.50){\circle*{0.23}}
  \put(4.13, 3.20){\circle*{0.23}}
  \put(7.17, 0.50){\circle*{0.23}}
  \put(5.87, 3.20){\circle*{0.23}}
  \put(9.87, 3.89){\circle*{0.23}}
  \put(6.95, 4.55){\circle*{0.23}}
  \put(8.91, 8.12){\circle*{0.23}}
  \put(6.56, 6.25){\circle*{0.23}}
  \put(5.00, 10.00){\circle*{0.23}}
  \put(5.00, 7.00){\circle*{0.23}}
  \put(1.09, 8.12){\circle*{0.23}}
  \put(3.44, 6.25){\circle*{0.23}}
  \put(0.13, 3.89){\circle*{0.23}}
  \put(3.05, 4.55){\circle*{0.23}}
  \put(0,11){\pdph{7}{3}{2.5} \pc\ type \nck{14}{9}}
  %
  \color{blue}
  \linethickness{0.33mm}
  \path(2.83, 0.50)(6.56, 6.25)
  \path(7.17, 0.50)(5.00, 7.00)
  \path(9.87, 3.89)(3.44, 6.25)
  \path(8.91, 8.12)(3.05, 4.55)
  \path(5.00, 10.00)(4.13, 3.20)
  \path(1.09, 8.12)(5.87, 3.20)
  \path(0.13, 3.89)(6.95, 4.55)
  \path(2.83, 0.50)(5.00, 7.00)
  \path(7.17, 0.50)(3.44, 6.25)
  \path(9.87, 3.89)(3.05, 4.55)
  \path(8.91, 8.12)(4.13, 3.20)
  \path(5.00, 10.00)(5.87, 3.20)
  \path(1.09, 8.12)(6.95, 4.55)
  \path(0.13, 3.89)(6.56, 6.25)
\end{picture}
}

\vsskip

\fxkcb{5} The ``gluing'' surgery mentioned above shows that if types \nck{m}{k} and \nck{n}{k} are feasible, then so is the type~\nck{m+n}{k}. 

This can be informally written as
$$
\nck{m}{k} \pc\ \nck{n}{k} \implies \nck{m+n}{k} \pp
$$

\vmskip

\centerline{%
\setlength{\unitlength}{6mm}
\begin{picture}(10,10)(-5,-6)
  
  
  \color{blue}
  \linethickness{0.33mm}
  
  \qbezier(0,4)(1.990,1.791)(3.980,-0.218)
  \qbezier(1.624,3.649)(2.544,0.824)(3.464,-2.000)
  \qbezier(2.970,2.676)(2.660,-0.280)(2.351,-3.236)
  \qbezier(3.804,1.236)(2.318,-1.339)(0.832,-3.914)
  \qbezier(3.980,-0.618)(1.574,-2.166)(-0.832,-3.914)
  \qbezier(3.464,-2.000)(0.557,-2.618)(-2.351,-3.236)
  \qbezier(2.351,-3.236)(-0.557,-2.618)(-3.464,-2.000)
  \qbezier(0.832,-3.914)(-1.574,-2.166)(-3.980,-0.418)
  \qbezier(-0.832,-3.914)(-2.318,-1.339)(-3.804,1.236)
  \qbezier(-2.351,-3.236)(-2.660,-0.280)(-2.970,2.676)
  \qbezier(-3.464,-2.000)(-2.544,0.824)(-1.624,3.649)
  \qbezier(-3.980,-0.418)(-1.990,1.791)(0,4)
  \qbezier(-3.804,1.236)(-1.090,2.442)(1.624,3.649)
  \qbezier(-2.970,2.676)(0,2.676)(2.970,2.676)
  \qbezier(-1.624,3.649)(1.090,2.442)(3.804,1.236)
  
  \color{black}
  \put(0,4){\circle*{0.25}}
  \put(1.624,3.649){\circle*{0.25}}
  \put(2.970,2.676){\circle*{0.25}}
  \put(3.804,1.236){\circle*{0.25}}
  \put(3.980,-0.218){\circle*{0.25}}
  \put(3.980,-0.618){\circle*{0.25}}
  \put(3.464,-2.000){\circle*{0.25}}
  \put(2.351,-3.236){\circle*{0.25}}
  \put(0.832,-3.914){\circle*{0.25}}
  \put(-0.832,-3.914){\circle*{0.25}}
  \put(-2.351,-3.236){\circle*{0.25}}
  \put(-3.464,-2.000){\circle*{0.25}}
  \put(-3.980,-0.418){\circle*{0.25}}
  \put(-3.804,1.236){\circle*{0.25}}
  \put(-2.970,2.676){\circle*{0.25}}
  \put(-1.624,3.649){\circle*{0.25}}
  
  \put(-4,-5.5){Example:~~\kcb{5} \nck{15}{6}, \nck{9}{6} $\implies$ \nck{24}{6}}
  
\end{picture}
\kern-7.45ex
\setlength{\unitlength}{5mm}
\begin{picture}(10,10)(-5,-6.52)
  
  
  \color{blue}
  \linethickness{0.33mm}
  
  \qbezier(0,4)(0.684,0.121)(1.368,-3.759)
  \qbezier(2.571,3.064)(0.602,-0.347)(-1.368,-3.759)
  \qbezier(3.939,0.695)(0.238,-0.653)(-3.464,-2.000)
  \qbezier(3.464,-2.000)(-0.238,-0.653)(-3.939,-0.050)
  \qbezier(1.368,-3.759)(-0.602,-0.347)(-2.571,3.064)
  \qbezier(-1.368,-3.759)(-0.684,0.121)(0,4)
  \qbezier(-3.464,-2.000)(-0.447,0.532)(2.571,3.064)
  \qbezier(-3.939,0.400)(0,0.695)(3.939,0.695)
  \qbezier(-2.571,3.064)(0.447,0.532)(3.464,-2.000)
  
  \color{black}
  \put(0,4){\circle*{0.3}}
  \put(2.571,3.064){\circle*{0.3}}
  \put(3.939,0.695){\circle*{0.3}}
  \put(3.464,-2.000){\circle*{0.3}}
  \put(1.368,-3.759){\circle*{0.3}}
  \put(-1.368,-3.759){\circle*{0.3}}
  \put(-3.464,-2.000){\circle*{0.3}}
  \put(-3.939,-0.050){\circle*{0.3}}
  \put(-3.939,0.400){\circle*{0.3}}
  \put(-2.571,3.064){\circle*{0.3}}
 
\end{picture}
}

\vsskip

\fxkcb{6} If type \nck{n}{k} is feasible, then for any natural $p$ there exists a polyline of type~\nck{pn}{pk}. That is,
$$
\nck{n}{k} \implies \nck{pn}{pk} \pp
$$
Indeed, consider some polyline $L$ of type \nck{n}{k} and choose a direction for traversing that polyline. Then any oriented edge $\vec e$ of the polyline can be shifted by a vector $\vec v$ of small length $\veps$ perpendicular to $\vec e$ and such that vector pair $(\vec e, \vec v)$ is positively oriented. Extending or shrinking these new line segments near the vertices of $L$, we obtain new polyline~$L_{\veps}$.

\wtext{%
\setlength{\unitlength}{3mm}
\begin{picture}(13,11)(-1,-1)
	\put(-0.8,-0.3){\circle*{0.5}}
	\put(9,2){\circle*{0.5}}
	\put(6,11){\circle*{0.5}}
	\put(3,2){\circle*{0.5}}
	\put(12.8,-0.3){\circle*{0.5}}
	\put(6,7){\circle*{0.5}}
	\put(1.1,0.7){\circle*{0.5}}
	\put(8.0,2.5){\circle*{0.5}}
	\put(6,9.0){\circle*{0.5}}
	\put(3.9,2.5){\circle*{0.5}}
	\put(11.0,0.7){\circle*{0.5}}
	\put(6,6.9){\circle*{0.5}}
    \color{blue}
    \linethickness{0.33mm}
	\path(6,7)(-0.8,-0.3)(9,2)(6,11)(3,2)(12.8,-0.3)(6,7)
	\path(1.1,0.7)(8.0,2.5)(6,9.0)(3.9,2.5)(11.0,0.7)(6,6.0)(1.1,0.7)
\end{picture}
}{w=0.26,v=0ex,l=9}

Build $p$ copies of this polyline, obtained from $L$ by such shifts using parameters $\veps$, $2\veps$, \ldots, $(p-1)\veps$ for a very small positive value of~$\veps$ (see an example for $p=2$ in the figure). Obviously, each vertex of original polyline turned into $p$ vertices, while every edge turned into $p$ edges. It is quite evident that every edge now intersects exactly $pk$ other edges. It remains to accurately merge all these $p$ ``parallel'' polylines---let us call them $L_1 = L$, $L_2$, \ldots,~$L_{p}$---into one closed polyline. 

First, consider case $p=2$. Choose an arbitrary vertex of the original polyline~$L_1$, as well as the corresponding vertex of the second polyline~$L_2$. Then the resolution ``surgery'', shown in the figure below, allows us to merge these two polylines into one closed polyline of type~\nck{2n}{2k}.

\vlskip

\centerline{%
\setlength{\unitlength}{3mm}
\begin{picture}(14,1.5)(-3,0)
	\put(5,0.3){\circle*{0.6}}
	\put(5,1.3){\circle*{0.6}}
	\put(11,-0.5){$L_1$}
	\put(-2.5,0.5){$L_2$}
    \color{blue}
    \linethickness{0.33mm}
	\path(0,0)(5,0.3)(10,0)
	\path(0,1)(5,1.3)(10,1)
\end{picture}
\kern8ex
\begin{picture}(3,1.5)
	\put(0,0.2){$\longrightarrow$}
\end{picture}
\kern4ex
\begin{picture}(14,1.5)(-3,0)
	\put(4.5,0.8){\circle*{0.6}}
	\put(5.5,0.8){\circle*{0.6}}
%
%
    \color{blue}
    \linethickness{0.33mm}
	\path(0,0)(4.5,0.8)(0,1)
	\path(10,0)(5.5,0.8)(10,1)
\end{picture}
}

\vmskip

Second, we need to demonstrate a similar merge for any odd value of $p = 2q+1$. To do that, consider an arbitrary vertex of the original polyline, as well as all $p-1$ corresponding additional vertices of the parallel polylines. After this, slightly shift the incident edges to perform the polyline surgery shown below. This will merge all the polylines in pairs $L_1$ and $L_2$, \ldots, $L_{2q-1}$ and~$L_{2q}$. See below an example for~$p=5$.

\vmskip

\centerline{%
\setlength{\unitlength}{3mm}
\begin{picture}(14,4)(-3,0)
	\put(5,0.3){\circle*{0.6}}
	\put(5,1.3){\circle*{0.6}}
	\put(5,2.3){\circle*{0.6}}
	\put(5,3.3){\circle*{0.6}}
	\put(5,4.3){\circle*{0.6}}
	\put(11,-0.5){$L_1$}
	\put(-2.5,0.5){$L_2$}
	\put(11,1.5){$L_3$}
	\put(-2.5,2.5){$L_4$}
	\put(11,3.5){$L_5$}
    \color{blue}
    \linethickness{0.33mm}
	\path(0,0)(5,0.3)(10,0)
	\path(0,1)(5,1.3)(10,1)
	\path(0,2)(5,2.3)(10,2)
	\path(0,3)(5,3.3)(10,3)
	\path(0,4)(5,4.3)(10,4)
\end{picture}
\kern8ex
\begin{picture}(3,5)
	\put(0,2){$\longrightarrow$}
\end{picture}
\kern4ex
\begin{picture}(14,4)(-3,0)
	\put(4.5,0.8){\circle*{0.6}}
	\put(5.5,0.8){\circle*{0.6}}
	\put(4.5,2.8){\circle*{0.6}}
	\put(5.5,2.8){\circle*{0.6}}
	\put(5,4.3){\circle*{0.6}}
	\put(11,-0.5){$L_1$}
	\put(-2.5,0.5){$L_2$}
	\put(11,1.5){$L_3$}
	\put(-2.5,2.5){$L_4$}
	\put(11,3.5){$L_5$}
    \color{blue}
    \linethickness{0.33mm}
	\path(0,0)(4.5,0.8)(0,1)
	\path(10,0)(5.5,0.8)(10,1)
	\path(0,2)(4.5,2.8)(0,3)
	\path(10,2)(5.5,2.8)(10,3)
	\path(0,4)(5,4.3)(10,4)
\end{picture}
}

\vmskip

After that, perform an identical operation for another vertex of $L_1 = L$, but this time connect the polylines in pairs $L_2$ and $L_3$, \ldots, $L_{2q}$ and~$L_{2q+1}$.

\vlskip

\centerline{%
\setlength{\unitlength}{3mm}
\begin{picture}(14,4)(-3,0)
	\put(5,0.3){\circle*{0.6}}
	\put(5,1.3){\circle*{0.6}}
	\put(5,2.3){\circle*{0.6}}
	\put(5,3.3){\circle*{0.6}}
	\put(5,4.3){\circle*{0.6}}
	\put(11,-0.5){$L_1$}
	\put(-2.5,0.5){$L_2$}
	\put(11,1.5){$L_3$}
	\put(-2.5,2.5){$L_4$}
	\put(11,3.5){$L_5$}
    \color{blue}
    \linethickness{0.33mm}
	\path(0,0)(5,0.3)(10,0)
	\path(0,1)(5,1.3)(10,1)
	\path(0,2)(5,2.3)(10,2)
	\path(0,3)(5,3.3)(10,3)
	\path(0,4)(5,4.3)(10,4)
\end{picture}
\kern8ex
\begin{picture}(3,5)
	\put(0,2){$\longrightarrow$}
\end{picture}
\kern4ex
\begin{picture}(14,4)(-3,0)
	\put(4.5,1.8){\circle*{0.6}}
	\put(5.5,1.8){\circle*{0.6}}
	\put(4.5,3.8){\circle*{0.6}}
	\put(5.5,3.8){\circle*{0.6}}
	\put(5,0.8){\circle*{0.6}}
	\put(11,-0.5){$L_1$}
	\put(-2.5,0.5){$L_2$}
	\put(11,1.5){$L_3$}
	\put(-2.5,2.5){$L_4$}
	\put(11,3.5){$L_5$}
    \color{blue}
    \linethickness{0.33mm}
	\path(0,0)(5,0.8)(10,0)
	\path(0,1)(4.5,1.8)(0,2)
	\path(10,1)(5.5,1.8)(10,2)
	\path(0,3)(4.5,3.8)(0,4)
	\path(10,3)(5.5,3.8)(10,4)
\end{picture}
}

\vmskip

As a result of these two surgeries, all polylines $L_i$ will be merged into one closed polyline of type~\nck{pn}{pk}. 

For any odd value of $p$, there exists another, somewhat simpler procedure which also proves implication $\nck{n}{k} \implies \nck{pn}{pk}$. Namely, every edge in a polyline of type \nck{n}{k} can be replaced by a $p$-fold ``lightning'' (see below).

\vmskip

\centerline{%
\setlength{\unitlength}{3mm}
\begin{picture}(10,4)(0,-1)
	\put(0,1){\circle*{0.6}}
	\put(10,1){\circle*{0.6}}
    \color{blue}
    \linethickness{0.33mm}
	\polyline(0,1)(10,1)
\end{picture}
\kern4ex
\begin{picture}(3,4)(0,-1)
	\put(0,0.6){$\longrightarrow$}
\end{picture}
\kern4ex
\begin{picture}(10,4)(0,-1)
	\put(0.0,1.0){\circle*{0.6}}
	\put(0.0,2.0){\circle*{0.6}}
	\put(0.0,3.0){\circle*{0.6}}
	\put(10.0,-1.0){\circle*{0.6}}
	\put(10.0,0.0){\circle*{0.6}}
	\put(10.0,1.0){\circle*{0.6}}
    \color{red}
    \linethickness{0.33mm}
	\path(0,1)(10,-1)(0,2)(10,0)(0,3)(10,1)
\end{picture}
}

\vsskip

\noindent and that surgery, obviously, creates a polyline of type~\nck{pn}{pk}.

\vsskip

\fxkcb{7} Executing procedures \kcb{6} and \kcb{5} proves that if types \nck{m}{p} and \nck{n}{q} are feasible, then so is type \nck{mq + np}{pq}.
$$
\nck{m}{p}, \nck{n}{q} \implies \nck{mq + np}{pq} \pp
$$
In particular, since for $p\gte 3$ there exists a polyline of type \nck{2p}{1}, we obtain
$$
\nck{m}{n}, p\gte 3 \implies \nck{m + 2pn}{n} \pp
$$

This particular corollary of procedure \kcb{7} can be strengthened.

\vsskip

\fxkcb{8} Let $k$ be even and assume that there exists a polyline of type~\nck{n}{k}. Consider an arbitrary edge $\tau$ of this polyline and replace it with $(k+1)$-fold lightning; this can be done since $k+1$ is odd. Each edge of the lightning intersects exactly $k$ other edges of the polyline. Obviously, the total number of edges has increased by $k$, and the number of self-intersection points on each of the edges $e_1$, ..., $e_k$ which intersected~$\tau$, has increased by $k$ and became~$2k$; all other edges were not affected by this operation. ``Breaking and skewing'' each edge $e_i$ at the new vertex which divides its $2k$ self-intersection points in two equal groups of size~$k$, we obtain the new polyline with $n + 2k$ edges and the same self-intersection index equal to~$k$.

\vmskip

\centerline{%
\setlength{\unitlength}{3mm}
\begin{picture}(11,6)(0,-2)
	\put(0,1.5){\circle*{0.5}}
	\put(10,1.5){\circle*{0.5}}
	\put(2.8,1.9){$\tau$}
    \color{blue}
    \linethickness{0.33mm}
	\polyline(-1,-0.7)(0,1.5)(10,1.5)(11,2.2)
	\polyline(2,-1)(1,4)
	\polyline(4,-1)(5,4)
	\polyline(7,-1)(6,4)
	\polyline(9,-1)(9,4)
\end{picture}
\kern4ex
\begin{picture}(3,6)(0,-2)
	\put(0,0.6){$\longrightarrow$}
\end{picture}
\kern4ex
\begin{picture}(11,6)(0,-2)
	\put(0.0,1.3){\circle*{0.5}}
	\put(0.0,2.2){\circle*{0.5}}
	\put(0.0,2.9){\circle*{0.5}}
	\put(10.0,0.1){\circle*{0.5}}
	\put(10.0,0.8){\circle*{0.5}}
	\put(10.0,1.5){\circle*{0.5}}
    \color{blue}
    \linethickness{0.33mm}
	\polyline(2,-1)(1,4)
	\polyline(4,-1)(5,4)
	\polyline(7,-1)(6,4)
	\polyline(9,-1)(9,4)
	\polyline(-1,-0.7)(0,1.5)
	\polyline(10,1.5)(11,2.2)
    \color{red}
	\polyline(0,1.5)(10,0.1)(0,2.2)(10,0.8)(0,2.9)(10,1.5)
\end{picture}
}

\vsskip

If $k$ is odd, then we can replace an arbitrary edge $\tau$ with $(2k+1)$-fold lightning, increasing the total number of edges by~$2k$. After that, on every edge intersecting $\tau$, the number of self-intersection points becomes equal $(k-1)+(2k+1) = 3k$. After we ``break and skew'' each of these edges in two respective points, the result will be a polyline of type~\nck{n + 4k}{k}.

Hence,
\begin{align*}
\text{for even } k \pc & \quad \nck{n}{k} \implies \nck{n + 2k}{k} \\
\text{for odd } k \pc & \quad \nck{n}{k} \implies \nck{n + 4k}{k} \pp
\end{align*}

\vsskip

\fxkcb{9} Let $k > 1$ be an odd number such that there exists a polyline $\mcl$ of type~\nck{n}{k}. Now assume that one of the self-intersection points on $\mcl$ (we will call it $X$; let $\tau_1 = AB$ and $\tau_2 = MN$ be the edges that intersect at~$X$) satisfies the following two conditions:

\wtext{%
\setlength{\unitlength}{3mm}
\begin{picture}(10,9)(-1,0)
	\put(0.0, 5.50){\circle*{0.5}} 
	\put(8.0, 4.7){\circle*{0.5}}  
	\put(4.0, 0.0){\circle*{0.5}}  
	\put(8.0, 7.0){\circle*{0.5}}  
	\put(0.0, 6.2){$A$}
	\put(8.0, 2.9){$B$}
	\put(1.8, 0.0){$M$}
	\put(7.5, 7.6){$N$}
	\put(5.5, 5.3){$X$}
	\color{blue}
	\linethickness{0.33mm}
	\path(8.0, 4.7)(0.0, 5.5)
	\path(4.0, 0.0)(8.0, 7.0)
	\path(5.3, 1.0)(4.1, 1.2)
	\path(5.7, 2.0)(4.9, 2.8)
	\path(4.3, 4.2)(4.4, 5.6)
	\path(2.2, 4.6)(2.0, 6.1)
\end{picture}}{w = 0.22}

(\kcb{9}/$1$) Point $X$ is the \textit{marginal} point\ftnnote{We will call a self-intersection point $X$ \textit{marginal} on edge~$e$, if $X$ is either the first or the last point of self-intersection on~$e$.} of self-intersection on each of the edges $\tau_i$; let us assume that vertex $B$ is its neighbor on $\tau_1$, while vertex $N$ is its neighbor on~$\tau_2$ (see the figure);

(\kcb{9}/$2$) If edges $AB$ and $MN$ are replaced by edges $AN$ and $BM$, polyline $\mcl$ remains connected. That is equivalent to the following property: when traversing polyline $\mcl$, orientations of the edges $AB$ and $MN$ (in this specific order) must be opposite. In other words: if $\mcl$ is traversed starting from $A \to B$, then $N$ is the first vertex of edge $\tau_2 = MN$ we would visit.

Under these assumptions, there exists a polyline of type~\nck{n+2k}{k}. That is,
$$
\nck{n}{k} \implies \nck{n+2k}{k} \pp
$$

\vtskip

Indeed, denote the edges following $AB$ and $MN$ by $BC$ and~$NP$. Now, duplicate vertices $B$ and $N$, creating vertices $B'$ and $N"$ in such a way that edges $BC$ and~$NP$ turn into edges $B'C$ and $N'P$; thereupon we insert two $k$-fold lightnings connecting pairs ($B$, $N'$) and ($B'$, $N$) respectively---see the figure.

\centerline{%
\setlength{\unitlength}{3mm}
\begin{picture}(23,10)
	\put(0.0, 5.5){\circle*{0.5}} 
	\put(8.0, 2.0){\circle*{0.5}} 
	\put(4.0, 0.0){\circle*{0.5}} 
	\put(8.0, 7.0){\circle*{0.5}} 
	\put(14.0, 2.0){\circle*{0.5}} 
	\put(14.0, 7.0){\circle*{0.5}} 
	\put(20.8, 4.7){\circle*{0.5}} 
	\put(20.1, 6.7){\circle*{0.5}} 
	\put(13.0, 2.0){\circle*{0.5}}
	\put(9.0, 7.0){\circle*{0.5}}
	\put(13.0, 7.0){\circle*{0.5}}
	\put(9.0, 2.0){\circle*{0.5}}
	\put(0.2, 6.0){$A$}
	\put(7.0, 0.0){$B$}
	\put(1.8, -0.5){$M$}
	\put(7.0, 7.5){$N$}
	\put(14.0, 0.0){$B'$}
	\put(13.6, 7.7){$N'$}
	\put(21.1, 3.7){$C$}
	\put(20.3, 6.9){$P$}
	\color{blue}
	\linethickness{0.33mm}
	\path(8.0, 2.0)(0.0, 5.5)
	\path(4.0, 0.0)(8.0, 7.0)
	\path(14.0, 2.0)(20.8, 4.7)
	\path(14.0, 7.0)(20.1, 6.7)
	\path(2.2, 3.6)(2.0, 5.3) 
	\path(4.2, 3.2)(4.3, 4.6) 
	\path(4.2, 1.2)(5.2, 1.0) 
	\path(4.7, 2.4)(5.7, 2.0) 
	\path(16.2, 6.1)(16.8, 8.3) 
	\path(18.2, 6.2)(18.3, 7.6) 
	\path(15.2, 3.5)(15.7, 2.2) 
	\path(18.7, 4.4)(20.3, 4.0) 
	\color{red}
	\path(8.0, 7.0)(13.0, 2.0)(9.0, 7.0)(14.0, 2.0)
	\path(8.0, 2.0)(13.0, 7.0)(9.0, 2.0)(14.0, 7.0)
\end{picture}}

\vmskip

The result of this surgery is polyline $\mcl'$ with $n+2k$ edges where every edge is intersected by exactly $k$ other edges which is what we need. It follows from condition (\kcb{9}/$2$) that the new polyline consists of one closed component.

\vtskip

\note. Note that every \nck{n}{k} polyline possesses at least one self-intersection point which satisfies condition~(\kcb{9}/$1$). In fact, there are at least three such points. As an example, consider a self-intersection point with maximum abscissa. Obviously, it must be a marginal point on both edges to which it belongs. However, not every such point also satisfies condition~(\kcb{9}/$2$)---e.g., numerous counterexamples can be found on polylines $\mcz_{12,3}$, $\mcz_{14,3}$ etc from the next section.

However, on any polyline obtained by applying procedure \kcb{9}, such a point exists, and so procedure \kcb{9} can be repeated as many times as needed. Indeed, on the illustration shown above, the rightmost intersection point of two inserted $k$-fold lightnings satisfies both conditions (\kcb{9}/$1$) and~(\kcb{9}/$2$).

\vtskip

\begin{question}
Does every \nck{n}{k} polyline contain at least one self-intersection point which satisfies both conditions (\kcb{9}/$1$) and~(\kcb{9}/$2$)? 
\end{question}

Generally speaking, we are only interested in the case of odd $k$, but it is quite possible that the parity of $k$ does not affect the answer to this question.

\vsskip

\fxkcb{10} For any natural $k$, type \nck{4k+4}{2k} is feasible. The respective polyline can be constructed as follows.

On line $y=-1$, for each value of index $i$ from $1$ to $k+1$ mark point $A_i = (-i, -1)$; on line $y=-2$, for the same values of index $i$ mark points $B_i = (i+1, -2)$; on line $x=1$, for indices $i$ from $1$ to $2k+1$ mark points $C_i = (1, i)$; and, finally, add one more point~$O = (0,0)$. The resulting $4k+4$ points are then connected by segments from the following list:
\begin{align*}
C_1B_{k+1}     &\pc\ B_{k+1}A_{k+1} \pc\ A_{k+1}O \pc\  OB_{2k+1} \,; \\
C_{2i-2k+2}A_i &\pc\ A_iC_{2i-2k+1} \pc\  i = 1, ..., k \,; \\
C_{2i-2k+3}B_i &\pc\ B_iC_{2i-2k+2} \pc\  i = 1, ..., k \pp
\end{align*}
These segments form a polyline with $4k+4$ edges and self-intersection index~$2k$. It might seem that a polyline of type \nck{4k+4}{2k} could be constructed using procedure \kcb{1}, if it were not for the fact that $\gcd(4k+4,k+1) = k+1 > 1$.

\vmskip

\centerline{%
\setlength{\unitlength}{10mm}
\begin{picture}(8,8.5)
	\put(4.0, 3.0){\circle*{0.15}}
	\put(5.0, 8.0){\circle*{0.15}}
	\put(6.0, 1.0){\circle*{0.15}}
	\put(5.0, 7.0){\circle*{0.15}}
	\put(3.0, 2.0){\circle*{0.15}}
	\put(5.0, 6.0){\circle*{0.15}}
	\put(7.0, 1.0){\circle*{0.15}}
	\put(5.0, 5.0){\circle*{0.15}}
	\put(2.0, 2.0){\circle*{0.15}}
	\put(5.0, 4.0){\circle*{0.15}}
	\put(8.0, 1.0){\circle*{0.15}}
	\put(1.0, 2.0){\circle*{0.15}}
	\put(4.2, 2.7){$O$}
	\put(2.5, 1.5){$A_1$}
	\put(1.5, 1.5){$A_2$}
	\put(0.5, 1.5){$A_3$}
	\put(6.0, 0.4){$B_1$}
	\put(7.0, 0.4){$B_2$}
	\put(8.0, 0.4){$B_3$}
	\put(4.4, 4.2){$C_1$}
	\put(4.4, 5.2){$C_2$}
	\put(4.4, 6.2){$C_3$}
	\put(4.4, 7.2){$C_4$}
	\put(4.4, 8.2){$C_5$}
	\color{blue}
	\linethickness{0.33mm}
	\path(4,3)(5,8)(6,1)(5,7)(3,2)(5,6)(7,1)(5,5)(2,2)(5,4)(8,1)(1,2)(4,3)
\end{picture}
}

\vsskip

This figure shows an example of such a polyline for $k=2$---its type is~\nck{12}{4}. This curious polyline (as well as many others presented in this article) belongs to the collection of Professor Knop's Zoo (see addendum of page~$\pageref{sec:knop_zoo}$). Unlike its more exotic sisters, this one is shown right here because it is but the first term of a sequence of similar polylines built using a specific and relatively elementary construction.

\vsskip

\fxkcb{11} Let us try to get some new examples of \nck nk polylines by a slight adjustment to procedure~\kcb{4}. Same as in \kcb{4}, consider natural number~$n$ and construct two regular polygons with $n$ sides, one of which is obtained from the other by homothety~$H$. This time, however, the homothety ratio is a negative number $-h$ for $h \gte 1$ (for instance, $h = 1.1$ or $h = 7.6$). Obviously, we need to make sure that no three of these $2n$ points lie on the same line. Further, choose two non-negative integers $p, q$ such that $\gcd(n, p+q) = 1$.

Index the vertices of the exterior polygon counterclockwise in the regular order, denoting them by $A_0$, $A_1$, ..., $A_{n-1}$ and treating the indices as residues modulo~$n$. Then denote the corresponding vertices of the interior polygon by $B_0$, $B_1$, ...,~$B_{n-1}$. In other words, the indexing is such that~$A_i = H(B_i)$.

Now, construct two families of segments: the segments in the first family connect vertices $A_i$ and $B_{i+p}$, while the segments of the second family connect vertices $A_i$ and~$B_{i-q}$. Collecting all the segments together, we obtain the closed polyline which we will name~\mdph{n}{p}{q}{h}. Since every vertex has exactly two edges issuing from it, the collection \mdph{n}{p}{q}{h} must be a union of several closed polylines. Clearly, after traversing two consecutive segments of this collection we return to the same polygon but with the index shift of $p+q$. Therefore, considering that $\gcd(n, p+q) = 1$, we conclude that any closed polyline in \mdph{n}{p}{q}{h} contains all vertices of each of the two given polygons. Hence, \mdph{n}{p}{q}{h} is indeed a closed polyline with $2n$ edges.

\vsskip

\centerline{\setlength{\unitlength}{7mm}
\begin{picture}(10,11)
  \put(8.12, 8.91){\circle*{0.23}}
  \put(2.92, 2.39){\circle*{0.23}}
  \put(3.89, 9.87){\circle*{0.23}}
  \put(5.74, 1.75){\circle*{0.23}}
  \put(0.50, 7.17){\circle*{0.23}}
  \put(8.00, 3.55){\circle*{0.23}}
  \put(0.50, 2.83){\circle*{0.23}}
  \put(8.00, 6.45){\circle*{0.23}}
  \put(3.89, 0.13){\circle*{0.23}}
  \put(5.74, 8.25){\circle*{0.23}}
  \put(8.12, 1.09){\circle*{0.23}}
  \put(2.92, 7.61){\circle*{0.23}}
  \put(10.00, 5.00){\circle*{0.23}}
  \put(1.67, 5.00){\circle*{0.23}}
  \put(0,10){\mdph{7}{1}{1}{1.5}}
  \color{blue}
  \linethickness{0.33mm}
  \path(8.12, 8.91)(5.74, 1.75)
  \path(3.89, 9.87)(8.00, 3.55)
  \path(0.50, 7.17)(8.00, 6.45)
  \path(0.50, 2.83)(5.74, 8.25)
  \path(3.89, 0.13)(2.92, 7.61)
  \path(8.12, 1.09)(1.67, 5.00)
  \path(10.00, 5.00)(2.92, 2.39)
  \path(8.12, 8.91)(1.67, 5.00)
  \path(3.89, 9.87)(2.92, 2.39)
  \path(0.50, 7.17)(5.74, 1.75)
  \path(0.50, 2.83)(8.00, 3.55)
  \path(3.89, 0.13)(8.00, 6.45)
  \path(8.12, 1.09)(5.74, 8.25)
  \path(10.00, 5.00)(2.92, 7.61)	
\end{picture}
\kern6ex
\begin{picture}(10,11)
  \put(8.54, 8.54){\circle*{0.23}}
  \put(2.28, 2.28){\circle*{0.23}}
  \put(5.00, 10.00){\circle*{0.23}}
  \put(5.00, 1.15){\circle*{0.23}}
  \put(1.46, 8.54){\circle*{0.23}}
  \put(7.72, 2.28){\circle*{0.23}}
  \put(0.00, 5.00){\circle*{0.23}}
  \put(8.85, 5.00){\circle*{0.23}}
  \put(1.46, 1.46){\circle*{0.23}}
  \put(7.72, 7.72){\circle*{0.23}}
  \put(5.00, 0.00){\circle*{0.23}}
  \put(5.00, 8.85){\circle*{0.23}}
  \put(8.54, 1.46){\circle*{0.23}}
  \put(2.28, 7.72){\circle*{0.23}}
  \put(10.00, 5.00){\circle*{0.23}}
  \put(1.15, 5.00){\circle*{0.23}}
  \put(0,10){\mdph{8}{1}{2}{1.3}}
  \color{blue}
  \linethickness{0.33mm}
  \path(8.54, 8.54)(5.00, 1.15)
  \path(1.46, 8.54)(8.85, 5.00)
  \path(0.00, 5.00)(7.72, 7.72)
  \path(1.46, 1.46)(5.00, 8.85)
  \path(5.00, 0.00)(2.28, 7.72)
  \path(8.54, 1.46)(1.15, 5.00)
  \path(10.00, 5.00)(2.28, 2.28)
  \path(8.54, 8.54)(2.28, 7.72)
  \path(5.00, 10.00)(1.15, 5.00)
  \path(1.46, 8.54)(2.28, 2.28)
  \path(0.00, 5.00)(5.00, 1.15)
  \path(1.46, 1.46)(7.72, 2.28)
  \path(5.00, 0.00)(8.85, 5.00)
  \path(8.54, 1.46)(7.72, 7.72)
  \color{red}
  \path(5.00, 10.00)(7.72, 2.28)
  \put(6.1,5.0){$7$}
  \color{darkgray}
  \path(10.00, 5.00)(5.00, 8.85)
  \put(8.6,6.2){$5$}
\end{picture}
}

\vsskip

The figure above shows polylines \mdph{7}{1}{1}{1.5} and~\mdph{8}{1}{2}{1.3}.

\vsskip

\centerline{\setlength{\unitlength}{7mm}
\begin{picture}(10,11)
  \put(8.12, 8.91){\circle*{0.23}}
  \put(4.48, 4.35){\circle*{0.23}}
  \put(3.89, 9.87){\circle*{0.23}}
  \put(5.19, 4.19){\circle*{0.23}}
  \put(0.50, 7.17){\circle*{0.23}}
  \put(5.75, 4.64){\circle*{0.23}}
  \put(0.50, 2.83){\circle*{0.23}}
  \put(5.75, 5.36){\circle*{0.23}}
  \put(3.89, 0.13){\circle*{0.23}}
  \put(5.19, 5.81){\circle*{0.23}}
  \put(8.12, 1.09){\circle*{0.23}}
  \put(4.48, 5.65){\circle*{0.23}}
  \put(10.00, 5.00){\circle*{0.23}}
  \put(4.17, 5.00){\circle*{0.23}}
  \put(1,10){\ndph{7}{1}{6}}
  \color{blue}
  \linethickness{0.33mm}
  \path(8.12, 8.91)(5.19, 4.19)
  \path(3.89, 9.87)(5.75, 4.64)
  \path(0.50, 7.17)(5.75, 5.36)
  \path(0.50, 2.83)(5.19, 5.81)
  \path(3.89, 0.13)(4.48, 5.65)
  \path(8.12, 1.09)(4.17, 5.00)
  \path(10.00, 5.00)(4.48, 4.35)
  \path(8.12, 8.91)(4.17, 5.00)
  \path(3.89, 9.87)(4.48, 4.35)
  \path(0.50, 7.17)(5.19, 4.19)
  \path(0.50, 2.83)(5.75, 4.64)
  \path(3.89, 0.13)(5.75, 5.36)
  \path(8.12, 1.09)(5.19, 5.81)
  \path(10.00, 5.00)(4.48, 5.65)
\end{picture}
\kern6ex
\begin{picture}(10,11)
  \put(8.83, 8.21){\circle*{0.20}}
  \put(3.08, 3.39){\circle*{0.20}}
  \put(5.87, 9.92){\circle*{0.20}}
  \put(4.57, 2.54){\circle*{0.20}}
  \put(2.50, 9.33){\circle*{0.20}}
  \put(6.25, 2.83){\circle*{0.20}}
  \put(0.30, 6.71){\circle*{0.20}}
  \put(7.35, 4.14){\circle*{0.20}}
  \put(0.30, 3.29){\circle*{0.20}}
  \put(7.35, 5.86){\circle*{0.20}}
  \put(2.50, 0.67){\circle*{0.20}}
  \put(6.25, 7.17){\circle*{0.20}}
  \put(5.87, 0.08){\circle*{0.20}}
  \put(4.57, 7.46){\circle*{0.20}}
  \put(8.83, 1.79){\circle*{0.20}}
  \put(3.08, 6.61){\circle*{0.20}}
  \put(10.00, 5.00){\circle*{0.20}}
  \put(2.50, 5.00){\circle*{0.20}}
  \put(8,10){\ndph{9}{2}{2}}
  \color{blue}
  \linethickness{0.33mm}
  \path(8.83, 8.21)(6.25, 2.83)
  \path(5.87, 9.92)(7.35, 4.14)
  \path(2.50, 9.33)(7.35, 5.86)
  \path(0.30, 6.71)(6.25, 7.17)
  \path(0.30, 3.29)(4.57, 7.46)
  \path(2.50, 0.67)(3.08, 6.61)
  \path(5.87, 0.08)(2.50, 5.00)
  \path(8.83, 1.79)(3.08, 3.39)
  \path(10.00, 5.00)(4.57, 2.54)
  \path(8.83, 8.21)(3.08, 6.61)
  \path(5.87, 9.92)(2.50, 5.00)
  \path(2.50, 9.33)(3.08, 3.39)
  \path(0.30, 6.71)(4.57, 2.54)
  \path(0.30, 3.29)(6.25, 2.83)
  \path(2.50, 0.67)(7.35, 4.14)
  \path(5.87, 0.08)(7.35, 5.86)
  \path(8.83, 1.79)(6.25, 7.17)
  \path(10.00, 5.00)(4.57, 7.46)
\end{picture}
}

\vsskip

It is easy to verify that in the first case, the self-intersection index for every edge is equal to~$8$, and therefore, the first polyline belongs to type~\nck{14}{8}. However, in the second polyline all the edges of the first family contain $7$ self-intersection points while that number for the second family equals~$5$. Generally speaking, even when $p \neq q$, these two self-intersection numbers could ``accidentally'' turn out to be equal if we were lucky with parameters $n$, $p$, $q$,~$h$---then polyline \mdph{n}{p}{q}{h} has the constant self-intersection index. That is, alas, a rather rare situation, and so we will concentrate exclusively on the case $p = q$, denoting polyline \mdph{n}{p}{p}{h} by~\ndph{n}{p}{h}.

Now, if $n$ is odd and such that $\gcd(n, p) = 1$, we will always (regardless of~$h$) obtain a closed polyline with fixed self-intersection index---in other words, \ndph{n}{p}{h} always has type \nck{2n}{k} for some value of parameter~$k$. It remains to compute the values that $k$ can take. For instance, when $n = 7$, $p = 1$, varying parameter $h$ yields us polylines of type \nck{14}{k} with $k = 6$ and~$k = 8$. The second figure above shows polylines \ndph{7}{1}{6} and~\ndph{9}{2}{2}. Quick check shows that these polylines satisfy conditions \nck{14}{6} and \nck{18}{6}, respectively.

\vsskip

\fxkcb{12} This final procedure generalizes many of polylines \pdph{n}{k}{h} constructed with~\kcb{4}.

Let us assume that we have a polyline of type \nck{n}{k} with odd~$n$. Double every edge of that polyline, similar to what we did in procedure \kcb{6}---but this time we will cross these doubled edges in every such pair.

It is easy to see that the resulting segment collection satisfies condition \nck{2n}{2k+1}, and, since $n$ is odd, these segments form one closed polyline. For instance, if we start with star~$\wstar(5,2)$, the result will be a polyline of type~\nck{10}{5} (see the figure below). In exactly the same manner, for any $k \gte 2$ we can transform a star of type $\wstar(2k+1,2)$ into a polyline of type~\nck{4k+2}{5}.

\vsskip

\centerline{\setlength{\unitlength}{4mm}
\begin{picture}(10,11)
  \put(0.24, 6.55){\circle*{0.40}}
  \put(2.06, 0.95){\circle*{0.40}}
  \put(7.94, 0.95){\circle*{0.40}}
  \put(9.76, 6.55){\circle*{0.40}}
  \put(5.00, 10.00){\circle*{0.40}}
  \put(8,9){\nck{5}{2}}
  \color{blue}
  \linethickness{0.33mm}
  \path(0.24, 6.55)(7.94, 0.95)
  \path(2.06, 0.95)(9.76, 6.55)
  \path(7.94, 0.95)(5.00, 10.00)
  \path(9.76, 6.55)(0.24, 6.55)
  \path(5.00, 10.00)(2.06, 0.95)
\end{picture}
\kern6ex
\begin{picture}(1,10)
  \put(0,5){$\longrightarrow$}
\end{picture}
\kern6ex
\begin{picture}(10,11)
  \put(-0.23, 6.70){\circle*{0.4}}
  \put(1.77, 0.55){\circle*{0.4}}
  \put(8.23, 0.55){\circle*{0.4}}
  \put(10.23, 6.70){\circle*{0.4}}
  \put(5.00, 10.50){\circle*{0.4}}
  \put(0.72, 6.39){\circle*{0.4}}
  \put(2.35, 1.36){\circle*{0.4}}
  \put(7.65, 1.36){\circle*{0.4}}
  \put(9.28, 6.39){\circle*{0.4}}
  \put(5.00, 9.50){\circle*{0.4}}
  \put(8,9){\nck{10}{5}}
  \color{blue}
  \linethickness{0.33mm}
  \path(-0.23, 6.70)(7.65, 1.36)
  \path(1.77, 0.55)(9.28, 6.39)
  \path(8.23, 0.55)(5.00, 9.50)
  \path(10.23, 6.70)(0.72, 6.39)
  \path(5.00, 10.50)(2.35, 1.36)
  \path(0.72, 6.39)(8.23, 0.55)
  \path(2.35, 1.36)(10.23, 6.70)
  \path(7.65, 1.36)(5.00, 10.50)
  \path(9.28, 6.39)(-0.23, 6.70)
  \path(5.00, 9.50)(1.77, 0.55)
\end{picture}
}

\vmskip

Hence, we have $\nck{n}{k} \implies \nck{2n}{2k+1}$ for odd $n$, or, equivalently,
$$
\nck{2p+1}{2q} \implies \nck{4p+2}{4q+1} \pp
$$
Here is another example. Star $\wstar(2p+1,p)$ is a polyline of type~\nck{2p+1}{2p-2}, and therefore, application of \kcb{12} proves that \nck{4p+2}{4p-3} is a feasible type for any~$p \gte 2$.

\section{\texorpdfstring{Computing $\mcc_3$}{Computing C-3}}
\hlabel{sec:c3}

For $k=1, 2$, we know the complete answer to the main question, and the solution in both cases is quite elementary (for example, it does not require any computer assistance). The case of $k=3$ is, arguably, the first non-trivial question in our investigation. At the same time, the setup of the problem is so simple and elementary that it can be presented even to fifth-grade students.

Since the self-intersection index is odd ($k=3$), the number of edges (that is, $n$) must be even. It is also obvious that $n \gte 8$ (case $n=6$ was already resolved in lemma~$\ref{thm:nk62})$. 

Polylines of types \nck{n}{3} for $n=10$, $12$, $14$ and $16$ can be found in Professor Knop's Zoo on page~$\pageref{fig:nck_z3}$. 
\hlabel{use:nck_z3}
Their pet names are $\mcz_{10,3}$, $\mcz_{12,3}$, $\mcz_{14,3}$, and~$\mcz_{16,3}$.

Applying procedure \kcb{6} for $p=6$ to a \nck{6}{1} polyline, we obtain a polyline of type~\nck{18}{3}. It can also be constructed applying procedure \kcb{3} to pair $n=9$,~$k=4$. Another, more intricate option: use procedure \kcb{9}, applying it to polyline~$\mcz_{12,3}$.

To construct polylines of types \nck{20}{3} and \nck{22}{3}, simply apply procedure \kcb{9} to the already constructed polylines \nck{14}{3} and~\nck{16}{3}. To do that, we just need to find in each of these polylines a suitable pair of edges. For polyline $\mcz_{14,3}$, pair $BA$, $DE$ is the one. Their intersection point is neighbor to vertices $A$ and $E$ respectively, and it is easy to verify that, if moving along the polyline in direction $B \to A$, we would visit $E$ before~$D$. For polyline $\mcz_{16,3}$, pair $KL$, $AQ$ does the trick.

Note that applying the same procedure to polyline $\mcz_{10,3}$ with its pair of edges $FG$, $JI$, we can build another example for~$n=16$. One more example of type \nck{14}{3} polyline is shown on page $\pageref{k4ex1}$ as an illustration of procedure~\kcb{4}.

For all even $n \gte 24$, the feasibility of type \nck{n}{3} is proved by applying procedure \kcb{8} to a \nck{n-12}{3} polyline, or by gluing together polylines of types \nck{n-10}{3} and~\nck{10}{3}.

The only type which presents a truly difficult challenge is~\nck{8}{3}. That is not a feasible type. However, at the moment, our only proof of that claim relies on the computer-assisted brute force analysis.

According to a communication from my colleague Konstantin Knop, he used a Python program, exploiting a MILP-style algorithm,\ftnnote{Mixed-Integer Linear Programming; this term refers to the problems in linear programming which involve both integer and floating types of numeric data.} to generate all possible $3315$ configurations of eight points in general position. Each of them produced $7!/2 = 2520$ polylines with none of them satisfying a \nck{8}{3} condition. Thus, the problem of computing set $\mcc_3$ is fully solved; it consists of all even numbers greater than~$8$.
$$
\mcc_3 = \{n \,|\, n\isdiv 2 \pc\ n \gte 10\} \pp
$$

\section{\texorpdfstring{Computing $\mcc_4$}{Computing C-4}}
\hlabel{sec:c4}

The case of $k=4$ is somewhat simpler (due to $k$ being even).

First, $n$ cannot be less than~$k+3 = 7$.

Second, for any natural $n\gte 7$ which is not a multiple of $3$, applying procedure \kcb{1} for $k=3$ yields a very simple and symmetric example of a type \nck{n}{4} polyline.

Third, when $n\equiv 0 \mmod 3$ and $n\gte 15$, number $m=n-8$ is not a multiple of $3$ and~$m\gte 7$. Therefore, a polyline of type \nck{m}{4} exists---and then application of \kcb{8} provides us with a polyline of type~\nck{n}{4}.

It follows that only two cases remain---namely, $n=9$ and~$n=12$. Polyline of type \nck{12}{4} has already been constructed as an example for procedure~\kcb{10}.

At this moment, the proof that type \nck{9}{4} is not feasible, alas, also requires computer assistance similar to the \nck{8}{3} case. The same MILP-based program, used by K.~Knop to investigate the former case, has generated $158817$ possible point configurations of nine points in general position. None of them could produce a \nck{9}{4} polyline.

Hence, set $\mcc_4$ consists of all natural numbers greater than $6$, save for~$9$.
$$
\mcc_4 = \{7,8\} \cup \{n \,|\, n \gte 10\} \pp
$$

\section{\texorpdfstring{Computing $\mcc_5$}{Computing C-5}}
\hlabel{sec:c5}

Clearly, when finding out whether type \nck{n}{5} is feasible, we only need to consider even numbers $n \gte 10$.

Procedure \kcb{12} allows us to construct a very simple polyline of type \nck{4m+2}{5} for any natural~$m \gte 2$. It remains to find examples of types \nck{4m}{5} for $m \gte 3$. Since any number $n = 4m$ for $m \gte 5$ can be represented as
$$
4m = 10 + \bigl(4(m-3)+2 \bigr) \pc \text{ for } m-3 \gte 2 \pc
$$
it follows that all these values of $n$, beginning with $20$, also belong to~$\mcc_5$. A \nck{16}{5} polyline {\hlabel{use:nck_z5}} can be found in the Zoo addendum on page~$\pageref{fig:nck_z5}$. However, we do not yet know if type \nck{12}{5} is feasible. Hence, set $\mcc_5$ is determined up to  this one member---with possible exclusion of number $12$ it consists of all even numbers greater than~$8$.

\section{\texorpdfstring{Computing $\mcc_6$}{Computing C-6}}
\hlabel{sec:c6}

For $k=6$, the situation is relatively simple but in some subcases it still presents a challenge.

First, note that for any odd $n \gte 9$ star $\wstar(n,4)$ gives us an example of type\nck{n}{6} polyline. Indeed, since $6 = 2 (4-1)$, procedure \kcb{1} can be applied to any value of $n$ which is greater than or equal to $2\cdot 4 + 1 = 9$ and co-prime with~$4$. Odd values of $n$ less than $9$ are certain not to belong to~$\mcc_6$.

Second, lemma \ref{thm:nnminus4} implies that type \nck{10}{6} is not feasible. Further, any even $n$, beginning with $18$, can be represented as the sum $9 + (n-9)$ (where the second summand $n-9$ is odd and is at least~$9$), and therefore, for such values of $n$ the desired example can be constructed by merging two polylines from the previous paragraph.

Further, a polyline of type \nck{16}{6} can be constructed by applying procedure \kcb{10} for~$k=3$. A type\nck{14}{6} polyline was already presented above as one of the examples for procedure \kcb{11}; namely, polyline~\ndph{7}{1}{6}. 

As for type \nck{12}{6}---a polyline for that type is shown in the left figure below.\ftnnote{This polyline is also a product of creative collaboration between Konstantin Knop and ~ChatGPT~$5.4$.} It is built on the vertices of the regular dodecagon indexed by residues modulo $12$ from $0$ to~$11$, which are then connected with each other using additive pattern~$(4,4,7)$. Namely, we connect the vertices in the following order:
\begin{align*}
a_0 = 0 \pc\ a_1 = a_0 + &\mathbf{4} = 4 \pc\ a_2 = a_1 + \mathbf{4} = 8 \pc\ a_3 = a_2 + \mathbf{7} = 3 \pc\ \\
a_4 = a_3 + &\mathbf{4} = 7 \pc\ a_5 = a_4 + \mathbf{4} = 11 \pc\ a_6 = a_5 + \mathbf{7} = 6 \pc\ \\
a_7 = a_6 + &\mathbf{4} = 10 \pc\ a_8 = a_7 + \mathbf{4} = 2 \pc\ a_9 = a_8 + \mathbf{7} = 9 \pc\ \\
a_10 = a_9 + &\mathbf{4} = 1 \pc\ a_{11} = a_{10} + \mathbf{4} = 5 \pc a_{12} = a_{11} + \mathbf{7} = 0 = a_0
\end{align*}
(all additions are done modulo~$12$).

Hence, we obtain that
$$
\mcc_6 = \{9\} \cup \{n \,|\, n \gte 11\} \pp
$$

\vsskip

\centerline{\setlength{\unitlength}{7mm}
\begin{picture}(10,11)
  \put(10.00, 5.00){\circle*{0.23}}
  \put(9.33, 7.50){\circle*{0.23}}
  \put(7.50, 9.33){\circle*{0.23}}
  \put(5.00, 10.00){\circle*{0.23}}
  \put(2.50, 9.33){\circle*{0.23}}
  \put(0.67, 7.50){\circle*{0.23}}
  \put(0.00, 5.00){\circle*{0.23}}
  \put(0.67, 2.50){\circle*{0.23}}
  \put(2.50, 0.67){\circle*{0.23}}
  \put(5.00, 0.00){\circle*{0.23}}
  \put(7.50, 0.67){\circle*{0.23}}
  \put(9.33, 2.50){\circle*{0.23}}
  \put(0,10){\nck{12}{6}}
  \color{blue}
  \linethickness{0.33mm}
  \path(10.00, 5.00)(0.67, 2.50)(9.33, 2.50)(5.00, 10.00)(7.50, 0.67)(7.50, 9.33
)(0.00, 5.00)(9.33, 7.50)(0.67, 7.50)(5.00, 0.00)(2.50, 9.33)(2.50, 0.67)(10.00,
 5.00)
\end{picture}
\kern6ex
\begin{picture}(10,11)
  \put(9.92, 5.87){\circle*{0.22}}
  \put(1.79, 8.83){\circle*{0.22}}
  \put(3.29, 0.30){\circle*{0.22}}
  \put(3.06, 9.61){\circle*{0.22}}
  \put(1.98, 1.01){\circle*{0.22}}
  \put(9.96, 4.38){\circle*{0.22}}
  \put(0.94, 2.08){\circle*{0.22}}
  \put(9.56, 2.94){\circle*{0.22}}
  \put(4.50, 9.98){\circle*{0.22}}
  \put(8.75, 1.69){\circle*{0.22}}
  \put(5.99, 9.90){\circle*{0.22}}
  \put(0.26, 3.41){\circle*{0.22}}
  \put(7.39, 9.39){\circle*{0.22}}
  \put(0.00, 4.88){\circle*{0.22}}
  \put(7.61, 0.73){\circle*{0.22}}
  \put(0.19, 6.35){\circle*{0.22}}
  \put(6.23, 0.15){\circle*{0.22}}
  \put(8.58, 8.49){\circle*{0.22}}
  \put(4.75, 0.01){\circle*{0.22}}
  \put(9.45, 7.28){\circle*{0.22}}
  \put(0.80, 7.71){\circle*{0.22}}
  \put(0,10){\nck{21}{12}}
  \color{blue}
  \linethickness{0.33mm}
  \path(9.92, 5.87)(1.79, 8.83)(3.29, 0.30)(3.06, 9.61)(1.98, 1.01)(9.96, 4.38)(0.94, 2.08)(9.56, 2.94)(4.50, 9.98)(8.75, 1.69)(5.99, 9.90)(0.26, 3.41)(7.39, 9.39)(0.00, 4.88)(7.61, 0.73)(0.19, 6.35)(6.23, 0.15)(8.58, 8.49)(4.75, 0.01)(9.45, 7.28)(0.80, 7.71)(9.92, 5.87)
\end{picture}
}

\vmskip

\note. It is easy to see that construction of the \nck{12}{6} polyline above can be generalized to some larger values of $n$ and~$k$. On the right figure above, we show one such polyline of type~\nck{21}{12}. It was obtained by using regular polygon with $21$ vertices and additive pattern~$(7, 7, 13)$.

\section{\texorpdfstring{Computing $\mcb_n$: case $n=42$}{Computing B-42}}
\hlabel{sec:b42}

As a non-trivial example of computing $\mcb_n$, we will apply all the knowledge accumulated in the previous sections to the problem mentioned at the very beginning of the article---finding all elements of set~$\mcb_{42}$. In other words, we need to determine feasibility of types \nck{42}{k} for all natural values of~$k \lte 39$. We already know that \nck{42}{k} polylines exist when $k=1$ and $k=2$; that is, $\{1, 2\} \subset \mcb_{42}$. 

\vsskip

\noindent If $k=3$, then applying \kcb{3} for $n=21$, $k=4$ gives us
$$
n \gte 2k+1, \gcd(n,k) = 1 \overset{\strut \kcb{3}}{\implies} \nck{2n}{k-1} =  \nck{42}{3} \pp
$$
If $k=4$, apply \kcb{6} for $p=2$:
$$
21 \gte 8 \implies \nck{21}{2} \overset{\strut \kcb{6}}{\implies} \nck{42}{4} \pp
$$
If $k=5$, recall polyline \pdph{7}{2}{1.5} that we have built as an illustration for the generalized procedure \kcb{4} (see page~$\pageref{fig:k4ex2}$). That polyline has type \nck{14}{5}, and therefore, we can glue together three copies of it, obtaining a \nck{42}{5} example.
$$
\nck{14}{5} \overset{\kcb{5}}{\implies} \nck{42}{5}
$$
If $k=6$, use \kcb{6} for $p=3$:
$$
14 \gte 8 \implies \nck{14}{2} \overset{\strut \kcb{6}}{\implies} \nck{42}{6} \pp
$$
If $k=7$, apply \kcb{6} for $p=7$:
$$
6\isdiv 2, 6 \gte 6 \implies \nck{6}{1} \overset{\strut \kcb{6}}{\implies} \nck{42}{7} \pp
$$
If $k=8$, use \kcb{1}:
\begin{align*}
42 \gte 2\cdot 5 + 1, \gcd(42,5) = 1 &\overset{\strut \kcb{1}}{\implies} \nck{42}{8} \pp
\end{align*}
If $k=9$, use \kcb{3}:
$$
21 \gte 2\cdot 10 + 1, \gcd(21,10) = 1 \overset{\strut \kcb{3}}{\implies} \nck{42}{9} \pp
$$
If $k=10$, apply \kcb{1} for $n = 19$ and $n = 23$, and then use~\kcb{5}:
\begin{align*}
19 \gte 2\cdot 6 + 1, \gcd(19,6) = 1 &\overset{\strut \kcb{1}}{\implies} \nck{19}{10} \pc \\
23 \gte 2\cdot 6 + 1, \gcd(23,6) = 1 &\overset{\strut \kcb{1}}{\implies} \nck{23}{10} \pc \\
\nck{19}{10}, \nck{23}{10} &\overset{\strut \kcb{5}}{\implies} \nck{42}{10} \pp
\end{align*}

\noindent For $k=11$ we will again need the generalized procedure~\kcb{4}---namely, polyline~\pdph{21}{4}{1.3}. That polyline (shown in the figure below) has the desired type~\nck{42}{11}.

\vsskip

\centerline{\setlength{\unitlength}{7mm}
\begin{picture}(13,13)
  \put(11.73, 7.77){\circle*{0.23}}
  \put(10.31, 7.33){\circle*{0.23}}
  \put(10.96, 9.38){\circle*{0.23}}
  \put(9.73, 8.54){\circle*{0.23}}
  \put(9.74, 10.69){\circle*{0.23}}
  \put(8.81, 9.53){\circle*{0.23}}
  \put(8.19, 11.59){\circle*{0.23}}
  \put(7.65, 10.20){\circle*{0.23}}
  \put(6.45, 11.98){\circle*{0.23}}
  \put(6.34, 10.50){\circle*{0.23}}
  \put(4.66, 11.85){\circle*{0.23}}
  \put(5.00, 10.40){\circle*{0.23}}
  \put(3.00, 11.20){\circle*{0.23}}
  \put(3.74, 9.91){\circle*{0.23}}
  \put(1.60, 10.08){\circle*{0.23}}
  \put(2.69, 9.07){\circle*{0.23}}
  \put(0.59, 8.60){\circle*{0.23}}
  \put(1.94, 7.96){\circle*{0.23}}
  \put(0.07, 6.89){\circle*{0.23}}
  \put(1.54, 6.67){\circle*{0.23}}
  \put(0.07, 5.11){\circle*{0.23}}
  \put(1.54, 5.33){\circle*{0.23}}
  \put(0.59, 3.40){\circle*{0.23}}
  \put(1.94, 4.04){\circle*{0.23}}
  \put(1.60, 1.92){\circle*{0.23}}
  \put(2.69, 2.93){\circle*{0.23}}
  \put(3.00, 0.80){\circle*{0.23}}
  \put(3.74, 2.09){\circle*{0.23}}
  \put(4.66, 0.15){\circle*{0.23}}
  \put(5.00, 1.60){\circle*{0.23}}
  \put(6.45, 0.02){\circle*{0.23}}
  \put(6.34, 1.50){\circle*{0.23}}
  \put(8.19, 0.41){\circle*{0.23}}
  \put(7.65, 1.80){\circle*{0.23}}
  \put(9.74, 1.31){\circle*{0.23}}
  \put(8.81, 2.47){\circle*{0.23}}
  \put(10.96, 2.62){\circle*{0.23}}
  \put(9.73, 3.46){\circle*{0.23}}
  \put(11.73, 4.23){\circle*{0.23}}
  \put(10.31, 4.67){\circle*{0.23}}
  \put(12.00, 6.00){\circle*{0.23}}
  \put(10.51, 6.00){\circle*{0.23}}
  \put(10.0, 12.0){\pdph{21}{4}{1.3} \pc\ type \nck{42}{11}}
  \color{blue}
  \linethickness{0.33mm}
  \path(11.73, 7.77)(6.34, 10.50)
  \path(10.96, 9.38)(5.00, 10.40)
  \path(9.74, 10.69)(3.74, 9.91)
  \path(8.19, 11.59)(2.69, 9.07)
  \path(6.45, 11.98)(1.94, 7.96)
  \path(4.66, 11.85)(1.54, 6.67)
  \path(3.00, 11.20)(1.54, 5.33)
  \path(1.60, 10.08)(1.94, 4.04)
  \path(0.59, 8.60)(2.69, 2.93)
  \path(0.07, 6.89)(3.74, 2.09)
  \path(0.07, 5.11)(5.00, 1.60)
  \path(0.59, 3.40)(6.34, 1.50)
  \path(1.60, 1.92)(7.65, 1.80)
  \path(3.00, 0.80)(8.81, 2.47)
  \path(4.66, 0.15)(9.73, 3.46)
  \path(6.45, 0.02)(10.31, 4.67)
  \path(8.19, 0.41)(10.51, 6.00)
  \path(9.74, 1.31)(10.31, 7.33)
  \path(10.96, 2.62)(9.73, 8.54)
  \path(11.73, 4.23)(8.81, 9.53)
  \path(12.00, 6.00)(7.65, 10.20)
  \path(11.73, 7.77)(8.81, 2.47)
  \path(10.96, 9.38)(9.73, 3.46)
  \path(9.74, 10.69)(10.31, 4.67)
  \path(8.19, 11.59)(10.51, 6.00)
  \path(6.45, 11.98)(10.31, 7.33)
  \path(4.66, 11.85)(9.73, 8.54)
  \path(3.00, 11.20)(8.81, 9.53)
  \path(1.60, 10.08)(7.65, 10.20)
  \path(0.59, 8.60)(6.34, 10.50)
  \path(0.07, 6.89)(5.00, 10.40)
  \path(0.07, 5.11)(3.74, 9.91)
  \path(0.59, 3.40)(2.69, 9.07)
  \path(1.60, 1.92)(1.94, 7.96)
  \path(3.00, 0.80)(1.54, 6.67)
  \path(4.66, 0.15)(1.54, 5.33)
  \path(6.45, 0.02)(1.94, 4.04)
  \path(8.19, 0.41)(2.69, 2.93)
  \path(9.74, 1.31)(3.74, 2.09)
  \path(10.96, 2.62)(5.00, 1.60)
  \path(11.73, 4.23)(6.34, 1.50)
  \path(12.00, 6.00)(7.65, 1.80)
\end{picture}
}

\vsskip

\noindent If $k=12$, then use \kcb{6} for $p=6$:
$$
\nck{7}{2} \overset{\strut \kcb{6}}{\implies} \nck{42}{12} \pp
$$
Since $42$ is co-prime with numbers $11$, $13$, $17$, and $19$, procedure \kcb{1} generates polylines of type \nck{42}{k} for $k=20$, $24$, $32$,~$36$. 

Further, since $21$ is co-prime with $5$, $8$, and $10$, procedure \kcb{3} produces polylines of type \nck{21}{k} with $k = 8$, $14$,~$18$. Applying \kcb{5} allows us to double the number of edges and obtain polylines of type \nck{42}{k} for $k = 8$, $14$, $18$. Using procedure \kcb{6} for $p=2$ adds feasible values $k = 16$, $28$,~$36$.

Polylines \pdph{21}{4}{1.2}, \pdph{21}{5}{1.2}, \pdph{21}{8}{15}, \pdph{21}{8}{1.1}, and \pdph{21}{10}{2} serve as examples for $k=13$, $k=17$, $k=25$, $k=29$, and~$k = 37$. We are asking the readers to either believe our word of honor or to verify that claim by drawing those polylines on their own.

\vtskip

\noindent For $k=14$, use \kcb{1} to construct two polylines of types \nck{17}{14} and~\nck{25}{14}, and then glue them together.
$$
\nck{17}{14}, \nck{25}{14} \overset{\kcb{5}}{\implies} \nck{42}{14} \pp
$$
For $k=15$, apply procedure \kcb{6} for $p=3$ to polyline~\pdph{7}{2}{1.5} (which we have already met before):
$$
\nck{14}{5} \overset{\kcb{6}}{\implies} \nck{42}{15} \pp
$$
Similarly, we can do the same to solve the case of~$k=27$. Since we have in our possession polyline~\pdph{7}{3}{2.5} of type \nck{14}{9}, tripling both parameters $n$ and $k$ yields us an example of type~\nck{42}{27}.

Since $42$ is co-prime with numbers $11$, $13$, $17$, and $19$, using procedure \kcb{1} produces stars of types \nck{42}{k} for $k=20, 24, 32, 36$. 

Further, since $21$ is co-prime with $5$, $8$, and $10$, procedure \kcb{3} produces polylines of types \nck{21}{k} for $k = 8, 14, 18$. Then we can apply procedure \kcb{5} to double the number of edges and obtain polylines of types \nck{42}{k} for $k = 8, 14, 18$. Alternatively, applying procedure \kcb{6} with $p=2$ to the same polylines yields us examples for $k = 16, 28, 36$. 

Next, applying procedure \kcb{11}, we can construct several more examples for self-intersection indices $k=22, 30, 34$. Namely, polylines \ndph{21}{4}{1.5}, \ndph{21}{2}{3}, and \ndph{21}{1}{10} have types \nck{42}{22}, \nck{42}{30}, and \nck{42}{34}, respectively. The first of these polylines is shown below as an illustration.

\vsskip

\centerline{\setlength{\unitlength}{7mm}
\begin{picture}(13,13)
  \put(11.73, 7.77){\circle*{0.23}}
  \put(2.18, 4.82){\circle*{0.23}}
  \put(10.96, 9.38){\circle*{0.23}}
  \put(2.70, 3.75){\circle*{0.23}}
  \put(9.74, 10.69){\circle*{0.23}}
  \put(3.51, 2.87){\circle*{0.23}}
  \put(8.19, 11.59){\circle*{0.23}}
  \put(4.54, 2.28){\circle*{0.23}}
  \put(6.45, 11.98){\circle*{0.23}}
  \put(5.70, 2.01){\circle*{0.23}}
  \put(4.66, 11.85){\circle*{0.23}}
  \put(6.89, 2.10){\circle*{0.23}}
  \put(3.00, 11.20){\circle*{0.23}}
  \put(8.00, 2.54){\circle*{0.23}}
  \put(1.60, 10.08){\circle*{0.23}}
  \put(8.93, 3.28){\circle*{0.23}}
  \put(0.59, 8.60){\circle*{0.23}}
  \put(9.60, 4.26){\circle*{0.23}}
  \put(0.07, 6.89){\circle*{0.23}}
  \put(9.96, 5.40){\circle*{0.23}}
  \put(0.07, 5.11){\circle*{0.23}}
  \put(9.96, 6.60){\circle*{0.23}}
  \put(0.59, 3.40){\circle*{0.23}}
  \put(9.60, 7.74){\circle*{0.23}}
  \put(1.60, 1.92){\circle*{0.23}}
  \put(8.93, 8.72){\circle*{0.23}}
  \put(3.00, 0.80){\circle*{0.23}}
  \put(8.00, 9.46){\circle*{0.23}}
  \put(4.66, 0.15){\circle*{0.23}}
  \put(6.89, 9.90){\circle*{0.23}}
  \put(6.45, 0.02){\circle*{0.23}}
  \put(5.70, 9.99){\circle*{0.23}}
  \put(8.19, 0.41){\circle*{0.23}}
  \put(4.54, 9.72){\circle*{0.23}}
  \put(9.74, 1.31){\circle*{0.23}}
  \put(3.51, 9.13){\circle*{0.23}}
  \put(10.96, 2.62){\circle*{0.23}}
  \put(2.70, 8.25){\circle*{0.23}}
  \put(11.73, 4.23){\circle*{0.23}}
  \put(2.18, 7.18){\circle*{0.23}}
  \put(12.00, 6.00){\circle*{0.23}}
  \put(2.00, 6.00){\circle*{0.23}}
  \put(10.0, 12.0){\ndph{21}{4}{1.5} \pc\ type \nck{42}{22}}
  \color{blue}
  \linethickness{0.33mm}
  \path(11.73, 7.77)(5.70, 2.01)
  \path(10.96, 9.38)(6.89, 2.10)
  \path(9.74, 10.69)(8.00, 2.54)
  \path(8.19, 11.59)(8.93, 3.28)
  \path(6.45, 11.98)(9.60, 4.26)
  \path(4.66, 11.85)(9.96, 5.40)
  \path(3.00, 11.20)(9.96, 6.60)
  \path(1.60, 10.08)(9.60, 7.74)
  \path(0.59, 8.60)(8.93, 8.72)
  \path(0.07, 6.89)(8.00, 9.46)
  \path(0.07, 5.11)(6.89, 9.90)
  \path(0.59, 3.40)(5.70, 9.99)
  \path(1.60, 1.92)(4.54, 9.72)
  \path(3.00, 0.80)(3.51, 9.13)
  \path(4.66, 0.15)(2.70, 8.25)
  \path(6.45, 0.02)(2.18, 7.18)
  \path(8.19, 0.41)(2.00, 6.00)
  \path(9.74, 1.31)(2.18, 4.82)
  \path(10.96, 2.62)(2.70, 3.75)
  \path(11.73, 4.23)(3.51, 2.87)
  \path(12.00, 6.00)(4.54, 2.28)
  \path(11.73, 7.77)(3.51, 9.13)
  \path(10.96, 9.38)(2.70, 8.25)
  \path(9.74, 10.69)(2.18, 7.18)
  \path(8.19, 11.59)(2.00, 6.00)
  \path(6.45, 11.98)(2.18, 4.82)
  \path(4.66, 11.85)(2.70, 3.75)
  \path(3.00, 11.20)(3.51, 2.87)
  \path(1.60, 10.08)(4.54, 2.28)
  \path(0.59, 8.60)(5.70, 2.01)
  \path(0.07, 6.89)(6.89, 2.10)
  \path(0.07, 5.11)(8.00, 2.54)
  \path(0.59, 3.40)(8.93, 3.28)
  \path(1.60, 1.92)(9.60, 4.26)
  \path(3.00, 0.80)(9.96, 5.40)
  \path(4.66, 0.15)(9.96, 6.60)
  \path(6.45, 0.02)(9.60, 7.74)
  \path(8.19, 0.41)(8.93, 8.72)
  \path(9.74, 1.31)(8.00, 9.46)
  \path(10.96, 2.62)(6.89, 9.90)
  \path(11.73, 4.23)(5.70, 9.99)
  \path(12.00, 6.00)(4.54, 9.72)
\end{picture}
}

\vsskip

\hlabel{use:nck_z7}
For $k = 21$ we will use polyline $\mcz_{14,7}$ (see page~$\pageref{fig:nck_z7}$), which we will ``triple'' by using procedure~\kcb{6}.
$$
\nck{14}{7} \overset{\kcb{6}}{\implies} \nck{42}{21} \pp
$$

\hlabel{use:nck_42z}
Finally, we claim the existence of the examples for $k=19, 23, 26, 31, 33, 35$---they live in the same remote zoo (see addendum at the end of the article). However, just one of them, $\mcz_{42,19}$ (page~$\pageref{fig:nck_42z}$) is actually presented there. We think that the reader will only need one peek at that illustration to understand our decision not to show you the other animals from the same complicated and esoteric family.

Due to lemmas $\ref{thm:nnminus3}$ and $\ref{thm:nnminus4}$ we also know that $38, 39 \notin \mcb_{42}$. Hence, set $\mcb_{42}$ is the interval consisting of all integer numbers from $1$ to~$37$.

\section{\texorpdfstring{When $n$ is sufficiently large. Part I}{When n is sufficiently large. Part I}}

\begin{theorem}
\hlabel{thm:nk_lower_bound_1}
Let $n$ and $k$ be two natural numbers such that at least one of them is even. Further, let $C = 3$ if $k$ is even, and $C = 20$ if $k$ is odd. Then a polyline of type \nck{n}{k} exists for any $n \gte C(k+1)$.
\end{theorem}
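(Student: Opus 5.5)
The plan is to build, for each fixed $k$, a short list of ``base'' feasible types \nck{m}{k}. These bases should cover every residue class of $n$ modulo a suitable step $s$. Procedure \kcb{8}, which adds $2k$ edges for even $k$ and $4k$ edges for odd $k$, or gluing via \kcb{5}, then fills in all larger $n$. A residue class is covered from its least base upward, so the threshold is the largest base in the list plus at most one step.

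\emph{Even $k$, $k=2j$.} Procedure \kcb{1} with span $j+1$ gives a star of type \nck{m}{k} for every $m \gte 2j+3 = k+3$ with $\gcd(m,j+1)=1$. By \kcb{8} each such type propagates to $m+2k, m+4k, \dots$. For even $k$ we need all $n \gte 3(k+1)$. Take the window $W=[3k+3,\,5k+2]$ of length $2k$, which is a complete residue system mod $2k$; it suffices to realize each $n\in W$ directly or by a base in $[k+3,n]$ congruent to $n$ mod $2k$. If $\gcd(n,j+1)=1$, the star gives $n$ directly. Otherwise write $n=a+b$ with $a,b\gte k+3$ both coprime to $j+1$ and glue by \kcb{5}. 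For example, take $a$ coprime to $j+1$ in a suitable short interval and check that $b=n-a$ is also coprime; choosing $a\equiv 1$ and $b\equiv n-1 \pmod{j+1}$ does not always work, so this step needs care. A cleaner alternative: for $n\in W$, the numbers $n-2k$ and $n$ are both candidates, together with sums of two stars from $[k+3,2k+3]$. Among the $j+1$ consecutive integers $a\in[k+3,k+3+j]$, those coprime to $j+1$ exist, for instance $a\equiv 1$. The delicate requirement is simultaneous coprimality of $a$ and $n-a$. When $j+1$ is even and $n$ is even this forces both odd, which is fine; the genuinely bad case is $n$ odd with $j+1$ even. In that case I would use \kcb{10} (type \nck{4j+4}{2j}, i.e.\ $m=2k+4$, which is not coprime to $j+1$) as one summand, together with an odd star. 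I expect this parity/coprimality bookkeeping to be the main obstacle for even $k$. The fallback is to glue \nck{2k+4}{k} with a star of the right parity, and to allow three summands when necessary, since $3(k+1)$ leaves room for that.

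\emph{Odd $k$.} Here $n$ must be even and the step is $4k$ by \kcb{8}. For bases I would use \kcb{4}: for odd $N$ with $N\gte 2k+1$ and $\gcd(N,k+1)=1$ it gives \nck{2N}{k}. Also \kcb{9} adds $2k$ and can be iterated once it has been applied, so from a \kcb{9}-admissible polyline we get step $2k$. The first application of \kcb{9} needs a suitable crossing. I would instead construct the base polyline by \kcb{4}, whose rotational symmetry should supply a marginal crossing with opposite orientations; if that fails, obtain bases from \kcb{5} by gluing \kcb{4}-polylines. Since $k+1$ is even, odd $N$ coprime to $k+1$ exist in every window of length $k+1$. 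This gives bases $2N$ hitting many even residues mod $4k$. Sums $2N_1+2N_2$ of two such bases, via \kcb{5}, cover all even residues mod $4k$ once $N_1,N_2$ range over windows of length about $2k$. This yields all even $n$ beyond roughly $2(2k+1)\cdot 2+4k\cdot(\text{small constant})$, and the generous constant $20$ absorbs the losses. The main difficulty is again a covering argument: showing that the sumset of $\{2N: N\text{ odd},\ \gcd(N,k+1)=1,\ N\in[2k+1,6k]\}$ meets every even class mod $4k$ below $20(k+1)$. I would handle it by taking $N_1=2k+1$ and $N_2$ ranging over the coprime odd values in an interval of length $2k$, supplemented by the sum of three terms when a class is missed.
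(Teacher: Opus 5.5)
Your proposal has a genuine gap. In both parities the decisive step, that the bases you list reach every required $n$, is called ``the main obstacle'' and left open, and the sketches you offer for it fail.

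\textbf{Odd case.} Every base is $2N$ with $N$ odd, because $k+1$ is even. So $2N\equiv 2\pmod 4$, every two-term sum $2N_1+2N_2$ is $\equiv 0\pmod 4$, and the \kcb{8} step $4k$ preserves the class mod $4$. Hence two-term sums cannot ``cover all even residues mod $4k$''. A missed class $\equiv 0\pmod 4$ also cannot be repaired by three terms, since those are again $\equiv 2\pmod 4$. Fixing $N_1=2k+1$ does miss such classes. For $k=5$ and $N_2\in[11,21]$ coprime to $6$ you get $44,48,56,60$, but nothing $\equiv 12\pmod{20}$, and the same happens for every interval of length $10$. Both summands must vary together, and this is the ingredient the paper supplies. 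By the Chinese remainder theorem, modulo $t=2(k+1)$ every even residue is a sum of two units and every odd residue is a sum of three. On the $2$-part use $1+(x-1)$ or $1+1+(x-2)$; on an odd prime power use $1+(x-1)$ or $2+(x-2)$, resp.\ $1+1+(x-2)$ or $1+2+(x-3)$. Lifting these and shifting summands by $\pm t$ writes any $m=n/2\gte 5t$ as a sum of two or three integers $x_i\gte t+1=2k+3$ coprime to $k+1$. Then \kcb{3} (or your \kcb{4}) gives \nck{2x_i}{k}, and \kcb{5} glues them. This needs no \kcb{8}, no \kcb{9} (whose applicability to \kcb{4}-polylines you only conjecture), and no bookkeeping mod $4k$. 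The bound $m\gte 10(k+1)$ is where the constant $20$ comes from.

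\textbf{Even case.} You have the right modulus: the index-$k$ star needs $\gcd(m,j+1)=1$ and $m\gte k+3$. You also correctly observe that for even $j+1$ all star sizes are odd, so odd $n$ is never a sum of two stars. But your fallbacks do not close this, and no bookkeeping with your toolkit (stars, \kcb{5}, \kcb{8}, \kcb{10}, at most three summands) can. Take $k=58$, $j+1=30$, $n=185\gte 3(k+1)=177$.
\begin{itemize}
\item Star sizes are the integers $\gte 61$ coprime to $30$, and $185=5\cdot 37$ is not one.
\item Three stars give $183$ or at least $189$.
\item The even amounts you can subtract are $2k=116$, $2k+4=120$, or a two-star sum ($122$ or at least $128$). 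These leave $69$, $65$, $63$, or something below $61$, and none of these is a star size.
\end{itemize}
The missing idea is a gluing that does not require closed components, as in the proof of Theorem~\ref{thm:nk_lower_bound_2}. Glue the closed star $\wstar(k+3,j+1)$ to the full span-$(j+1)$ diagonal system of a convex $(n-k-3)$-gon. That system falls into $\gcd(n-k-3,j+1)\lte j+1$ closed pieces, and you glue by that many simultaneous corner surgeries. Here $185=61+124$ with two pieces, and the method yields every $n\gte 2k+6$.

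Do not expect to borrow this case from the paper's own argument. It applies the corollary with $t=k+1$, which is not the modulus the star needs. For $k=4$ the corollary allows $15=6+9$, yet neither \nck{6}{4} nor \nck{9}{4} is feasible.
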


\begin{proof}
To prove it, we will use the following simple fact from number theory.

\begin{lemma}
\hlabel{thm:ab_sum_coprime_12}
Let $s$ and $t$ be two natural numbers. Then

(a) if $t$ is odd and $s \gte t$, then $s$ can be represented as a sum of no more than two natural numbers co-prime with~$t$;

(b) if $t$ is even and $s \gte 2t$, then $s$ can be represented as a sum of no more than three natural numbers co-prime with~$t$.
\end{lemma}

\begin{mproof}
Express $t$ as the product of powers of primes:
$$
t = p_1^{\alpha_1} ... p_m^{\alpha_m} \pc\ p_1 < \ldots < p_m \pp
$$
Now we will prove that any residue modulo $t$ can be represented as the sum of $d$ invertible residues, where $d$ is either two or three.

First, assume that $t$ is odd, and prove this for $d=2$. By the \href{https://en.wikipedia.org/wiki/Chinese_remainder_theorem}{Chinese remainder theorem}, it suffices to prove this for the case when $t$ is a power of an odd prime~$p$.

Indeed, consider any residue $x \in \Z/t\Z$ and express it as
$$
x = 1 + (x-1) \pc\ x = 2 + (x-2) \pp
$$
Residues $1$ and $2$ are invertible in $\Z/t\Z$, and at least one of the elements $x-1$ and $x-2$ of that ring must be invertible in $\Z/t\Z$, since at least one of these numbers is co-prime with~$p \gte 3$.

The remaining case is when $t$ is even. Then any element $x \in \Z/t\Z$ can be expressed as the sum of two or three invertible elements. Just as before, it is enough to prove this for any power of a prime. For a power of two we have
$$
x = 1 + (x-1) = 1 + 1 + (x-2) \pp
$$
One of the numbers $x-1$ and $x-2$ is odd and therefore is invertible in~$\Z/2^\alpha\Z$. It remains to prove that for any power of an odd prime $p$ any residue can be represented both as the sum of two and as the sum of three invertible elements. Indeed,
$$
x = 1 + (x-1) = 2 + (x-2) \pc\ \  x = 1 + 1 + (x-2) = 1 + 2 + (x-3) \pp
$$
Hence, if $x-2$ is invertible, then we have $x = 2 + (x-2) = 1 + 1 + (x-2)$. And if $x-2$ is not co-prime with $p$, then we will use representation $x = 1 + (x-1) = 1 + 2 + (x-3)$.

Consider now number $s$ and its remainder modulo~$t$. If $t$ is odd, that remainder can be represented as the sum of two invertible residues modulo~$t$. Let us regard these residues as ordinary natural numbers not exceeding~$t$. Since $s \gte t$, the sum of these two numbers cannot exceed~$s$. Therefore, by adding multiples of $t$ if necessary, we will obtain that $s$ equals the sum of two natural numbers co-prime with~$t$, and we are done.

The proof for the case of even $t$ and representation of $s \mmod t$ as the sum of no more than three invertible residues is entirely analogous.
\end{mproof}

\begin{corollary}
\hlabel{thm:ab_sum_coprime_35}
(a) If $t$ is odd and $s \gte 3t$, then $s$ can be represented as a sum of no more than two natural numbers greater than or equal to $t$ and co-prime with it.

(b) If $t$ is even and $s \gte 5t$, then $s$ can be represented as a sum of no more than three natural numbers greater than or equal to $t$ and co-prime with it.
\end{corollary}

\begin{mproof}
Let us prove item (b) (the proof of (a) is almost identical). We will use lemma $\ref{thm:ab_sum_coprime_12}$(b) to find some representation of $s$ as a sum of two or three natural numbers co-prime with~$t$. If one of these numbers---call it $x$---is less than $t$, then the sum of the remaining summands is greater than $4t$, and therefore, one of them---say, $y$---is greater than~$2t$. We can now replace numbers $x$, $y$ with $x+t$, $y-t$, obtaining two numbers with the same sum which are both greater than or equal to~$t$. If there is a third summand which is less than $t$, we can perform the same operation one more time.
\end{mproof}

Now back to the proof of the theorem. We may assume that $k \gte 2$ (case $k=1$ is well known, and we have already constructed the necessary polylines for all even values of $n \gte 6$, which is more than enough in this particular case).

Now, consider the case when $k$ is odd and~$n = 2m$. Since $n \gte 20(k+1)$, we can apply item (b) of the corollary $\ref{thm:ab_sum_coprime_35}$ to the pair $s = m$, $t = 2(k+1)$. This yields the representation $m = x_1 + x_2 + \ldots  + x_s$, where every $x_i$ is greater than or equal to $t+1 = 2k+3$ and is co-prime with~$k+1$. Hence, we can apply procedure \kcb{3} for the pair $(x_i,k)$, obtaining a polyline of type~\nck{2x_i}{k}. Now it suffices to apply procedures \kcb{5}, gluing all these polylines together. Since $2(x_1 + \ldots + x_s) = 2m = n$, the resulting polyline will have type~\nck{n}{k}. 

If $k$ is even, then item (a) of corollary $\ref{thm:ab_sum_coprime_35}$ can be applied to the pair $s = n$, $t = k+1$. In order to do that, we only need to make sure that $n \gte 3(k+1)$. Now we can use procedures \kcb{1} for every pair $(x_i,k)$, and then glue together all the polylines we will have obtained.
\end{proof}

\begin{corollary}
Let $k$ be a fixed natural number. Then any sufficiently large natural number $n$ belongs to~$\mcc_k$, provided that $nk$ is even.
\end{corollary}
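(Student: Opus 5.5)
This corollary follows directly from Theorem~\ref{thm:nk_lower_bound_1}. The plan is to fix $k$, set $N_k = C(k+1)$ with $C=3$ for even $k$ and $C=20$ for odd $k$, and show that every $n \gte N_k$ with $nk$ even lies in $\mcc_k$.

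There are two cases according to the parity of $k$.

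\textbf{Even $k$.} Here $nk$ is even automatically. The theorem then gives a polyline of type \nck{n}{k} for every $n \gte 3(k+1)$.

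\textbf{Odd $k$.} Here the condition that $nk$ is even forces $n$ to be even. This is also required by Lemma~\ref{thm:nk_must_be_even}, so no admissible $n$ is lost. The theorem then gives a polyline of type \nck{n}{k} for every even $n \gte 20(k+1)$.

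In both cases every sufficiently large admissible $n$ belongs to $\mcc_k$, which is exactly the statement. The case $k=1$ is covered inside the theorem's proof by the explicit description of $\mcc_1$, so it needs no separate treatment.

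There is no real obstacle here. The only point to check is that the theorem's hypothesis, ``at least one of $n,k$ is even,'' is the same as the condition ``$nk$ is even.'' Once this is noted, the threshold $C(k+1)$ can be taken as the witness for ``sufficiently large.''
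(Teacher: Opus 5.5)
Your argument is correct and is exactly the paper's: the corollary is stated without a separate proof, as an immediate consequence of Theorem~\ref{thm:nk_lower_bound_1}, with $C(k+1)$ as the threshold and ``$nk$ even'' being precisely the theorem's parity hypothesis. One caveat about the result you are citing: in the even-$k$ branch of that theorem's proof, applying \kcb{1} to the pair $(x_i,k)$ yields index $2(k-1)$, not $k$. The star actually needed is $\wstar(x_i,k/2+1)$, which requires $x_i\gte k+3$ and $\gcd(x_i,k/2+1)=1$, and the choice $t=k+1$ guarantees neither; for example, with $k=4$ the corollary's procedure gives $15=6+9$, and $\langle 9\times 4\rangle$ is infeasible. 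The conclusion still holds, though: the star-and-gluing construction in the proof of Theorem~\ref{thm:nk_lower_bound_2}(a) works verbatim for every $d\gte 3$, so it covers all even $k$ and all $n\gte 2k+6$ on its own.
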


\section{\texorpdfstring{When $n$ is sufficiently large. Part II}{When n is sufficiently large. Part II}}

Another idea that allows us to obtain a more accurate lower bound for the elements of set~$\mcc_k$, consists of using a version of polyline surgery which generalizes the one described in procedures \kcb{5} and~\kcb{6} (this approach was sketched out by Alexander Kovalji in his article~\cite{akk_2003}; here we give a more or less complete description of that method, including small improvement of the bound given there).

This approach, obviously, allows us to improve the lower bound for the case of odd $k$ as well.

\vtskip

\begin{theorem}
\hlabel{thm:nk_lower_bound_2}
Number $n$ belongs to $\mcc_k$ (in other words, polylines of type \nck nk exist), if one of the following two conditions is met:

(a) Number $k$ is even and $n \gte 2k+3$;

(b) Number $k$ is odd and $n$ is an even number such that~$n \gte 8k+6$.
\end{theorem}

\begin{proof}
Let us begin with the case when $k$ is even and equal to~$2p$. From Theorem \ref{thm:nk_lower_bound_1}(a), it follows that we only need to prove the stated claim for $2k+3 \lte n \lte 3k+2$.

For values $n = 2k+3$ and $n = 2k+5$, it suffices to apply procedure~\kcb{1}. Indeed, 
$$
n = 4p + 4 \pm 1 \gte 2p+3 \pc\ \gcd(4(p+1)\pm 1, p+1) = 1 \pc
$$
and therefore, there exists a polyline of type~\nck{n}{2p}. Further, procedure \kcb{10} yields an example of a type $\nck{4p+4}{2p}$ polyline, which proves the claim for~$n=2k+4$.

Hence, we may assume that $n \gte 2k+6$. As we already know, we can construct a very simple polyline $\mcl_1$ of type \nck{k+3}{k}, formed by the main diagonals of an arbitrary convex polygon $M_1$ with $k+3$ vertices. Consider now type \nck {k+d}{k}, where $d$ is some natural number from $3$ to~$k-1$. Take the vertices of some other convex polygon $M_2$ with $k+d$ vertices, and draw all diagonals of span~$p+1$. In this collection of segments---we will call it $\mcl_2$---each of its segments crosses exactly $2p = k$ other segments belonging to the collection. If $\mcl_2$ were a closed polyline, we could then use procedure \kcb{5}, and, by gluing together $\mcl_1$ and $\mcl_2$, obtain a polyline of type \nck{2k+d+3}{k} for any $d$ from $3$ to~$k-1$. That would give us all types \nck{n}{k} for $2k+6 \lte n \lte 3k+2$. However, in general, $\mcl_2$ is a union of $s$ separate closed polylines where
$$
s = \gcd(2p+d, p+1) = \gcd(d-2, p+1) \pp
$$
Hence, $s$ can easily be greater than $1$, but it is obvious that $s \lte p+1$. At the same time each of the polygons $\mcl_1$ and $\mcl_2$ has at least $2p+3$ vertices. Therefore, the number of vertices in each polygon is more than twice the number of components of collection~$\mcl_2$. It is also obvious that if we index the vertices of $M_2$ along the polygon's contour, then any $s$ consecutive vertices belong to different components.

\vmskip

\centerline{\setlength{\unitlength}{5.5mm}
\begin{picture}(13,12)
	\linethickness{0.2mm}
	\color{red}
	\multiput(0.0,7.0)(1,0){12}{\path(0,0)(0.5,0)}
	\multiput(0.0,8.0)(1,0){12}{\path(0,0)(0.5,0)}
    \linethickness{0.4mm}
    \color{blue}
	\polyline(0.0,2.0)(6.0,7.1)(12.0,2.0)(0.0,2.0)
    \color{brown}
	\polyline(0.0,4.0)(8.0,0.0)(10.0,7.0)(0.0,4.0)
    \color{green}
	\polyline(4.0,0.0)(2.0,7.0)(12.0,4.0)(4.0,0.0)
	%
    \color{black}
	\put(0.0,2.0){\circle*{0.28}}
	\put(0.0,4.0){\circle*{0.28}}
	\put(2.0,7.0){\circle*{0.28}}
	\put(4.0,0.0){\circle*{0.28}}
	\put(6.0,7.1){\circle*{0.28}}
	\put(8.0,0.0){\circle*{0.28}}
	\put(10.0,7.0){\circle*{0.28}}
	\put(12.0,2.0){\circle*{0.28}}
	\put(12.0,4.0){\circle*{0.28}}
	\put(12.0,6.0){$\mcl_2$}
	%
    \color{black}
	\put(2.0,8.0){\circle*{0.28}}
	\put(2.0,10.0){\circle*{0.28}}
	\put(10.0,8.0){\circle*{0.28}}
	\put(10.0,10.0){\circle*{0.28}}
	\put(6.0,7.9){\circle*{0.28}}
	\put(4.0,12.0){\circle*{0.28}}
	\put(8.0,12.0){\circle*{0.28}}
	\put(12.0,11.0){$\mcl_1$}
	\polyline(2.0,8.0)(10.0,10.0)(2.0,10.0)(10.0,8.0)(4.0,12.0)(6.0,7.9)(8.0,12.0)(2.0,8.0)
\end{picture}
\kern1ex
\begin{picture}(2,12)
	\put(0.5,7.5){$\longrightarrow$}
\end{picture}
\kern2ex
\begin{picture}(12,12)
	\linethickness{0.2mm}
	\color{red}
	\multiput(0.0,7.0)(1,0){12}{\path(0,0)(0.5,0)}
    \linethickness{0.4mm}
    \color{black}
	\put(0.0,2.0){\circle*{0.28}}
	\put(0.0,4.0){\circle*{0.28}}
	\put(1.7,7.0){\circle*{0.28}}
	\put(2.3,7.0){\circle*{0.28}}
	\put(4.0,0.0){\circle*{0.28}}
	\put(5.7,7.0){\circle*{0.28}}
	\put(6.3,7.0){\circle*{0.28}}
	\put(8.0,0.0){\circle*{0.28}}
	\put(9.7,7.0){\circle*{0.28}}
	\put(10.3,7.0){\circle*{0.28}}
	\put(12.0,2.0){\circle*{0.28}}
	\put(12.0,4.0){\circle*{0.28}}
	\put(2.0,8.75){\circle*{0.28}}
	\put(10.0,8.75){\circle*{0.28}}
	\put(4.0,10.75){\circle*{0.28}}
	\put(8.0,10.75){\circle*{0.28}}
	\put(10.0,11.0){$\mcl_3$}
	\polyline(5.7,7.0)(0.0,2.0)(12.0,2.0)(6.3,7.0)
	\polyline(10.3,7.0)(8.0,0.0)(0.0,4.0)(9.7,7.0)
	\polyline(2.3,7.0)(12.0,4.0)(4.0,0.0)(1.7,7.0)
	\polyline(1.7,7.0)(8.0,10.75)(6.3,7.0)
	\polyline(2.3,7.0)(10.0,8.75)(2.0,8.75)(9.7,7.0)
	\polyline(5.7,7.0)(4.0,10.75)(10.3,7.0)
\end{picture}%
}

\vmskip

This implies that we can construct and position polygons $M_1$ and $M_2$ as shown in the figure above---so that $s$ consecutive ``lower'' vertices of $M_1$ have nearly the same $y$-coordinate, and the same is true for some $s$ consecutive ``upper'' vertices of~$M_2$. Now, shift the upper polygon $M_1$ downward so that the corresponding ``corners'' of $M_1$ and $M_2$ will overlap; then in the neighborhood of each of the $s$ overlaps we can apply the surgery already demonstrated in procedures \kcb{5} and~\kcb{6}. After this $s$-fold surgery, all $s+1$ original closed polylines in $\mcl_1$ and $\mcl_2$ will merge into one closed polyline of type \nck{2k+d+3}{k}, completing the construction.

In the figure above, this procedure is performed for $k = 4$, $p = 2$, $d = 5$, $s = \gcd(d-2,p+1) = 3$. Hence, we are gluing together a $7$-edge polyline $\mcl_1$ of type \nck{7}{4} and collection $\mcl_2$ of three closed polylines, which satisfies the \nck{9}{4} condition, resulting in the closed polyline $\mcl_3$ of type~\nck{16}{4}.

Finally, item (b) can be easily derived from~(a). Indeed, use item (a), applied to~$k^* = 2k$. Then for any $m \gte 2k^*+3 = 4k+3$ there exists a polyline of type $\nck{m}{k^*} = \nck{m}{2k}$. Using procedure \kcb{2} for $p=2$ yields us a polyline of type~\nck{2m}{k}, finalizing the proof of our theorem.
\end{proof}

\section{Unsolved problems and conjectures}

This section is a simple compilation of a few unsolved problems, including one conjecture.

\begin{qnxbox}{Non-feasibility for \nck{8}{3} and~\nck{9}{4}}
\noindent
Find proofs of non-feasibility for the types \nck{8}{3} and~\nck{9}{4} that do not rely on computer assistance.
\end{qnxbox}

\begin{qnxbox}{Computing $\mcc_5$}
\noindent 
Is \nck{12}{5} a feasible type?
\end{qnxbox}

\begin{qnxbox}{Type \nck{2k+2}{2}}
\noindent 
Is is true that for any $k \gte 4$ there exists a polyline of type \nck{2k+2}{k}\,?
\end{qnxbox}

\begin{qnxbox}{\textbf{Conjecture}: if $nk\isdiv 2$, then type \nck{n}{k} is feasible for any $n \gte 2k+4$}
\noindent 
Provided that at least one of the numbers $n$, $k$ is even and $n \gte 2k+4$, does there always exist a polyline of type \nck{n}{k}\,?
\end{qnxbox}

\vtskip

Finally, the general problem, whose solution is not likely to be found in the next few years.

\begin{qnxbox}{Polylines with fixed self-intersection index}
\noindent 
Describe all pairs of natural numbers $n$ and $k$ for which type \nck{n}{k} is feasible.
\end{qnxbox}

\section*{Acknowledgments}

The author would like to express his gratitude to Alexey Vyskubov and Alexander Merkurjev for fruitful discussions on this topic. 

Special thanks go to Konstantin Knop for numerous examples of closed polylines with constant self-intersection index that he constructed in close and friendly collaboration with AI model ChatGPT~$5.4$ (\copyright~OpenAI,~$2026$); in particular, see addendum ``Professor Knop's Zoo'' on page~$\pageref{sec:knop_zoo}$.

For help with generating the snippets of code used in some drawings and for constructing computational models for the search of \nck{n}{k} polylines, the author would like to thank everyone who contributed to the creation and training of the AI engine Claude Sonnet~$4.5$ (\copyright~Anthropic,~$2025$).


\vmskip

\clearpage 
\section*{Addendum: Professor Knop's Zoo}
\hlabel{sec:knop_zoo}

All the polylines in this section were graciously communicated to the author by his friend and colleague Konstantin Knop. They are the fruit of a very successful collaboration between him and his rather capricious research assistant ChatGPT~$5.4$. We are referring to some of these polylines (under their proper names of the form $\mcz_{12,3}$ or $\mcz_{14,7}$) in the main body of this article.

As it turned out, some of these examples can be generated using procedures \kcb{4} or \kcb{12}, but we decided to keep them here so that our readers could enjoy the sight of these exotic animals.

We begin with the series of polylines with self-intersection index equal to~$3$. These examples were used in section ``Computing $\mcc_3$'' on page~$\pageref{use:nck_z3}$.

\vmskip
\hlabel{fig:nck_z3}

\centerline{\setlength{\unitlength}{8mm}
\begin{picture}(9,8)
	\put(0.0,7.0){\circle*{0.2}}  
	\put(7.0,4.3){\circle*{0.2}}  
	\put(4.0,1.0){\circle*{0.2}}  
	\put(5.0,4.7){\circle*{0.2}}  
	\put(6.0,1.0){\circle*{0.2}}  
	\put(2.0,6.0){\circle*{0.2}}  
	\put(9.0,4.0){\circle*{0.2}}  
	\put(4.3,3.8){\circle*{0.2}}  
	\put(6.0,7.0){\circle*{0.2}}  
	\put(5.0,4.0){\circle*{0.2}}  
	\put(3.0,7.2){$(\mcz_{10,3})$}
	\put(0.0,7.3){$A$}
	\put(7.0,3.7){$B$}
	\put(3.3,0.5){$C$}
	\put(4.3,4.6){$D$}
	\put(6.3,0.5){$E$}
	\put(2.0,6.2){$F$}
	\put(9.0,4.2){$G$}
	\put(3.6,3.8){$H$}
	\put(6.2,6.9){$I$}
	\put(5.2,4.1){$J$}
	\color{blue}
	\linethickness{0.33mm}
	\path(0.0,7.0)(7.0,4.3)(4.0,1.0)(5.0,4.7)(6.0,1.0)(2.0,6.0)(9.0,4.0)(4.3,3.8)(6.0,7.0)(5.0,4.0)(0.0,7.0)
\end{picture}
\kern6ex
\begin{picture}(8,8)
	\put(2,0){\circle*{0.2}}
	\put(6,3.25){\circle*{0.2}}
	\put(5,7){\circle*{0.2}}
	\put(4,3){\circle*{0.2}}
	\put(0,5){\circle*{0.2}}
	\put(4,2){\circle*{0.2}}
	\put(6,7){\circle*{0.2}}
	\put(2,3.75){\circle*{0.2}}
	\put(3,0){\circle*{0.2}}
	\put(4,4){\circle*{0.2}}
	\put(8,2){\circle*{0.2}}
	\put(4,5){\circle*{0.2}}
	\put(1.5,7.2){$(\mcz_{12,3})$}
	\put(1.4,0.1){$A$}
	\put(6.1,3.3){$B$}
	\put(4.2,6.7){$C$}
	\put(4.1,2.7){$D$}
	\put(0.0,5.2){$E$}
	\put(4.1,1.4){$F$}
	\put(6.2,6.7){$G$}
	\put(1.3,3.3){$H$}
	\put(3.2,0.1){$I$}
	\put(3.8,4.2){$J$}
	\put(7.8,2.3){$K$}
	\put(3.8,5.2){$L$}
	\color{blue}
	\linethickness{0.33mm}
	\path(2,0)(6,3.25)(5,7)(4,3)(0,5)(4,2)(6,7)(2,3.75)(3,0)(4,4)(8,2)(4,5)(2,0)
\end{picture}
}

\vlskip

\centerline{%
\setlength{\unitlength}{8mm}
\begin{picture}(9,8)
	\put(0,2){\circle*{0.2}}
	\put(7.3,1.7){\circle*{0.2}}
	\put(8.5,6.7){\circle*{0.2}}
	\put(4.5,2.6){\circle*{0.2}}
	\put(1.3,0.0){\circle*{0.2}}
	\put(8,1.7){\circle*{0.2}}
	\put(1.1,7){\circle*{0.2}}
	\put(4.5,2.3){\circle*{0.2}}
	\put(9,0){\circle*{0.2}}
	\put(7.2,2.1){\circle*{0.2}}
	\put(1.9,2.1){\circle*{0.2}}
	\put(7.6,6.7){\circle*{0.2}}
	\put(8.5,0.4){\circle*{0.2}}
	\put(5.7,3){\circle*{0.2}}
	\put(4,6.5){$(\mcz_{14,3})$}
	\put(-0.3,1.3){$A$}
	\put(6.8,1.0){$B$}
	\put(8.8,6.6){$C$}
	\put(4.0,3.0){$D$}
	\put(0.8,0.3){$E$}
	\put(8.0,1.9){$F$}
	\put(0.5,6.2){$G$}
	\put(4.2,1.5){$H$}
	\put(9.3,-0.1){$I$}
	\put(6.8,2.3){$J$}
	\put(1.4,2.4){$K$}
	\put(6.8,6.7){$L$}
	\put(8.6,0.5){$M$}
	\put(5.8,3.1){$N$}
	\color{blue}
	\linethickness{0.33mm}
	\path(0,2)(7.3,1.7)(8.5,6.7)(4.5,2.6)(1.3,0.0)(8,1.7)(1.1,7)(4.5,2.3)(9,0)(7.2,2.1)(1.9,2.1)(7.6,6.7)(8.5,0.4)(5.7,3)(0,2)
\end{picture}
\kern4ex
\setlength{\unitlength}{5mm}
\begin{picture}(15,16)
	\put(1,0){\circle*{0.3}}
	\put(3,10){\circle*{0.3}}
	\put(13,4){\circle*{0.3}}
	\put(6,0){\circle*{0.3}}
	\put(8,5){\circle*{0.3}}
	\put(0,3){\circle*{0.3}}
	\put(5,6){\circle*{0.3}}
	\put(8,12){\circle*{0.3}}
	\put(9,5){\circle*{0.3}}
	\put(8,0){\circle*{0.3}}
	\put(4,5){\circle*{0.3}}
	\put(1,15){\circle*{0.3}}
	\put(12,5){\circle*{0.3}}
	\put(6,4){\circle*{0.3}}
	\put(15,3){\circle*{0.3}}
	\put(2,13){\circle*{0.3}}
	\put(12,10){$(\mcz_{16,3})$}
	\put(1.2,0.1){$A$}
	\put(3.0,10.1){$B$}
	\put(13.2,4.1){$C$}
	\put(4.9,0.1){$D$}
	\put(7.5,5.3){$E$}
	\put(-0.4,3.3){$F$}
	\put(4.3,6.3){$G$}
	\put(8.2,12.0){$H$}
	\put(9.3,4.9){$I$}
	\put(8.3,0.0){$J$}
	\put(3.1,4.0){$K$}
	\put(0.1,14.3){$L$}
	\put(12.1,5.2){$M$}
	\put(5.6,2.9){$N$}
	\put(14.1,2.0){$P$}
	\put(2.2,13.2){$Q$}
	\color{blue}
	\linethickness{0.33mm}
	\path(1,0)(3,10)(13,4)(6,0)(8,5)(0,3)(5,6)(8,12)(9,5)(8,0)(4,5)(1,15)(12,5)(6,4)(15,3)(2,13)(1,0)
\end{picture}
}

\vlskip

The next denizen of our zoo is relatively tame and very helpful. This polyline shows that type~\nck{14}{7} is feasible and therefore yields us an example of a type \nck{2k}{k} polyline for odd $k$ of form~$4p+3$ (for any odd $k$ of form $4p+1$ such a polyline can be easily constructed by applying procedure \kcb{12} to star~$\wstar(4p+1,2p)$). See page~$\pageref{use:nck_z7}$ for the use of this tricky configuration.

\vmskip

\hlabel{fig:nck_z7}

\centerline{\setlength{\unitlength}{10mm}
\begin{picture}(10,8.5)
  \put(10.00, 6.00){\circle*{0.16}}
  \put(1.00, 1.00){\circle*{0.16}}
  \put(5.00, 8.00){\circle*{0.16}}
  \put(9.00, 1.00){\circle*{0.16}}
  \put(0.00, 6.00){\circle*{0.16}}
  \put(7.00, 3.00){\circle*{0.16}}
  \put(2.00, 6.00){\circle*{0.16}}
  \put(8.00, 5.00){\circle*{0.16}}
  \put(0.00, 0.00){\circle*{0.16}}
  \put(5.00, 7.00){\circle*{0.16}}
  \put(10.00, 0.00){\circle*{0.16}}
  \put(2.00, 5.00){\circle*{0.16}}
  \put(8.00, 6.00){\circle*{0.16}}
  \put(3.00, 3.00){\circle*{0.16}}
  \put(7.5,7.75){$(\mcz_{14,7})$}
  \color{blue}
  \linethickness{0.33mm}
  \path(10.00, 6.00)(1.00, 1.00)(5.00, 8.00)(9.00, 1.00)(0.00, 6.00)(7.00, 3.00)(2.00, 6.00)(8.00, 5.00)(0.00, 0.00)(5.00, 7.00)(10.00, 0.00)(2.00, 5.00)(8.00, 6.00)(3.00, 3.00)(10.00, 6.00)
\end{picture}
}

\vmskip

And here is yet another polyline, not too exotic, but not very simple either. Its self-intersection index is equal to $5$ and we used it as a part in our computation of $\mcc_5$ on page~$\pageref{use:nck_z5}$.

\vmskip

\hlabel{fig:nck_z5}
\centerline{%
\setlength{\unitlength}{12mm}
\begin{picture}(9,8)
  \put(0.00, 1.00){\circle*{0.13}}
  \put(4.00, 4.00){\circle*{0.13}}
  \put(5.00, 0.00){\circle*{0.13}}
  \put(5.00, 5.00){\circle*{0.13}}
  \put(4.00, 0.00){\circle*{0.13}}
  \put(7.00, 5.00){\circle*{0.13}}
  \put(2.00, 7.00){\circle*{0.13}}
  \put(4.00, 1.00){\circle*{0.13}}
  \put(8.00, 6.00){\circle*{0.13}}
  \put(4.00, 3.00){\circle*{0.13}}
  \put(3.00, 7.00){\circle*{0.13}}
  \put(3.00, 2.00){\circle*{0.13}}
  \put(4.00, 7.00){\circle*{0.13}}
  \put(1.00, 2.00){\circle*{0.13}}
  \put(6.00, 0.00){\circle*{0.13}}
  \put(4.00, 6.00){\circle*{0.13}}
  \put(6.5,6.75){$(\mcz_{16,5})$}
  \color{blue}
  \linethickness{0.33mm}
  \path(0.00, 1.00)(4.00, 4.00)
  \path(4.00, 4.00)(5.00, 0.00)
  \path(5.00, 0.00)(5.00, 5.00)
  \path(5.00, 5.00)(4.00, 0.00)
  \path(4.00, 0.00)(7.00, 5.00)
  \path(7.00, 5.00)(2.00, 7.00)
  \path(2.00, 7.00)(4.00, 1.00)
  \path(4.00, 1.00)(8.00, 6.00)
  \path(8.00, 6.00)(4.00, 3.00)
  \path(4.00, 3.00)(3.00, 7.00)
  \path(3.00, 7.00)(3.00, 2.00)
  \path(3.00, 2.00)(4.00, 7.00)
  \path(4.00, 7.00)(1.00, 2.00)
  \path(1.00, 2.00)(6.00, 0.00)
  \path(6.00, 0.00)(4.00, 6.00)
  \path(4.00, 6.00)(0.00, 1.00)
\end{picture}
}

\vmskip

The next cage houses the family of a few very convoluted polylines which we needed for computing $\mcb_{42}$ on page~$\pageref{use:nck_42z}$. They all belong to types~\nck{42}{k}. Here, for the sake of sanity, we will only present one of them---namely, polyline~$\mcz_{42,19}$.

\vmskip

\centerline{%
\hlabel{fig:nck_42z}
\setlength{\unitlength}{10mm}
\begin{picture}(15,21)
  \put(14.0, 12.0){\circle*{0.15}}
  \put(7.0, 18.0){\circle*{0.15}}
  \put(2.0, 7.0){\circle*{0.15}}
  \put(7.0, 17.0){\circle*{0.15}}
  \put(14.0, 4.0){\circle*{0.15}}
  \put(11.0, 17.0){\circle*{0.15}}
  \put(1.0, 16.0){\circle*{0.15}}
  \put(10.0, 3.0){\circle*{0.15}}
  \put(2.0, 15.0){\circle*{0.15}}
  \put(12.0, 6.0){\circle*{0.15}}
  \put(3.0, 14.0){\circle*{0.15}}
  \put(8.0, 5.0){\circle*{0.15}}
  \put(15.0, 11.0){\circle*{0.15}}
  \put(2.0, 5.0){\circle*{0.15}}
  \put(14.0, 9.0){\circle*{0.15}}
  \put(10.0, 17.0){\circle*{0.15}}
  \put(3.0, 7.0){\circle*{0.15}}
  \put(7.0, 19.0){\circle*{0.15}}
  \put(1.0, 4.0){\circle*{0.15}}
  \put(13.0, 9.0){\circle*{0.15}}
  \put(10.0, 20.0){\circle*{0.15}}
  \put(13.0, 7.0){\circle*{0.15}}
  \put(11.0, 20.0){\circle*{0.15}}
  \put(4.0, 13.0){\circle*{0.15}}
  \put(10.0, 0.0){\circle*{0.15}}
  \put(13.0, 15.0){\circle*{0.15}}
  \put(4.0, 17.0){\circle*{0.15}}
  \put(12.0, 14.0){\circle*{0.15}}
  \put(0.0, 17.0){\circle*{0.15}}
  \put(11.0, 7.0){\circle*{0.15}}
  \put(5.0, 16.0){\circle*{0.15}}
  \put(14.0, 16.0){\circle*{0.15}}
  \put(10.0, 5.0){\circle*{0.15}}
  \put(2.0, 9.0){\circle*{0.15}}
  \put(10.0, 6.0){\circle*{0.15}}
  \put(13.0, 14.0){\circle*{0.15}}
  \put(3.0, 6.0){\circle*{0.15}}
  \put(10.0, 18.0){\circle*{0.15}}
  \put(3.0, 8.0){\circle*{0.15}}
  \put(7.0, 20.0){\circle*{0.15}}
  \put(13.0, 12.0){\circle*{0.15}}
  \put(5.0, 7.0){\circle*{0.15}}
  \put(1,19.5){$(\mcz_{42,19})$}
  \color{blue}
  \linethickness{0.33mm}
  \path(14.0, 12.0)(7.0, 18.0)
  \path(7.0, 18.0)(2.0, 7.0)
  \path(2.0, 7.0)(7.0, 17.0)
  \path(7.0, 17.0)(14.0, 4.0)
  \path(14.0, 4.0)(11.0, 17.0)
  \path(11.0, 17.0)(1.0, 16.0)
  \path(1.0, 16.0)(10.0, 3.0)
  \path(10.0, 3.0)(2.0, 15.0)
  \path(2.0, 15.0)(12.0, 6.0)
  \path(12.0, 6.0)(3.0, 14.0)
  \path(3.0, 14.0)(8.0, 5.0)
  \path(8.0, 5.0)(15.0, 11.0)
  \path(15.0, 11.0)(2.0, 5.0)
  \path(2.0, 5.0)(14.0, 9.0)
  \path(14.0, 9.0)(10.0, 17.0)
  \path(10.0, 17.0)(3.0, 7.0)
  \path(3.0, 7.0)(7.0, 19.0)
  \path(7.0, 19.0)(1.0, 4.0)
  \path(1.0, 4.0)(13.0, 9.0)
  \path(13.0, 9.0)(10.0, 20.0)
  \path(10.0, 20.0)(13.0, 7.0)
  \path(13.0, 7.0)(11.0, 20.0)
  \path(11.0, 20.0)(4.0, 13.0)
  \path(4.0, 13.0)(10.0, 0.0)
  \path(10.0, 0.0)(13.0, 15.0)
  \path(13.0, 15.0)(4.0, 17.0)
  \path(4.0, 17.0)(12.0, 14.0)
  \path(12.0, 14.0)(0.0, 17.0)
  \path(0.0, 17.0)(11.0, 7.0)
  \path(11.0, 7.0)(5.0, 16.0)
  \path(5.0, 16.0)(14.0, 16.0)
  \path(14.0, 16.0)(10.0, 5.0)
  \path(10.0, 5.0)(2.0, 9.0)
  \path(2.0, 9.0)(10.0, 6.0)
  \path(10.0, 6.0)(13.0, 14.0)
  \path(13.0, 14.0)(3.0, 6.0)
  \path(3.0, 6.0)(10.0, 18.0)
  \path(10.0, 18.0)(3.0, 8.0)
  \path(3.0, 8.0)(7.0, 20.0)
  \path(7.0, 20.0)(13.0, 12.0)
  \path(13.0, 12.0)(5.0, 7.0)
  \path(5.0, 7.0)(14.0, 12.0)
\end{picture}%
}

\vmskip

If our meticulous readers are interested in getting the exact data underlying the polylines $\mcz_{42,k}$ for the values $k=19, 23, 26, 31, 33, 35$, that information can be easily obtained by contacting either the author or the curator of this wonderful zoo, Konstantin A.~Knop.

\end{document}